\def\<{\langle}
\def\>{\rangle}
\numberwithin{equation}{section}
\def\K{{\bf K}}
\def\T{{\bf T}}
\def\BB{{\mathcal B}}
\def\DD{{\mathcal D}}
\def\HH{{\mathcal H}}
\def\KK{{\mathcal K}}
\def\MM{{\mathcal M}}
\def\PP{{\mathcal P}}
\def\RR{{\mathcal R}}
\def\XX{{\mathcal X}}
\def\YY{{\mathcal Y}}
\def\bbC{\mathbb{C}}
\def\bbZ{\mathbb{Z}}
\def\bbD{\mathbb{D}}
\def\bbT{\mathbb{T}}
\def\HHH{\mathfrak{H}}
\def\LLL{\mathfrak{L}}
\def\lll{\mathfrak{l}}
\newcommand{\Comp}{C}
\newtheorem{lemma}{Lemma}[section]
\newtheorem{theorem}[lemma]{Theorem}
\newtheorem{corollary}[lemma]{Corollary}
\theoremstyle{definition}
\newtheorem{exer}[lemma]{Exercise}
\let\oldtocsection=\tocsection
\let\oldtocsubsection=\tocsubsection
\renewcommand{\tocsection}[2]{\hspace{0em}\oldtocsection{#1}{#2}}
\renewcommand{\tocsubsection}[2]{\hspace{1em}\oldtocsubsection{#1}{#2}}
\newenvironment{exercise}%
{\parindent=1cm\bigskip\begin{minipage}{11cm}\begin{exer}\begin{small}}%
{\end{small}\end{exer}\end{minipage}\bigskip}
\title{A short introduction to de Branges--Rovnyak spaces}
\author{Dan Timotin}
\address{Institute of Mathematics of the Romanian Academy, P.O. Box 1-764, Bucharest 
014700, Romania}
\email{Dan.Timotin@imar.ro}
\thanks{The author is partially supported by a grant of the Romanian National Authority for Scientific
Research, CNCS Ð UEFISCDI, project number PN-II-ID-PCE-2011-3-0119.}
\begin{document}

\maketitle

\tableofcontents
\begin{abstract}The notes provide a short introduction to de Branges--Rovnyak spaces. They cover some basic facts and are intended to give the reader a taste of the theory, providing sufficient motivation to make it interesting.
\end{abstract}

\section{Introduction}

The purpose of these notes is to provide a short introduction to de Branges--Rovnyak spaces, that have been introduced in~\cite{BR, BR1}, an area that has seen significant research activity in the last years. They are  intended to give to a casual reader a taste of the theory, providing sufficient motivation and connections with other domains to make it, hopefully, interesting. There exist  two comprehensive references on the subject: the older book of Sarason~\cite{S} that contains most of the basic facts, and the more recent monograph of Fricain and Mashreghi~\cite{FM}. The interested reader may study in depth the subject from there.  

The prerequisites are basic facts in operator theory on Hilbert space and in Hardy spaces. Many books contain them, but we have preferred to give as a comprehensive reference Nikolski's treaty~\cite{N}, where all can be found (see the beginning of Section~\ref{se:prelim}).

So the plan of the notes is the following. We  introduce the de Branges--Rovnyak spaces as natural reproducing kernel spaces, generalizing the model spaces that appear prominently in the theory of contractions. The challenge here is to justify in a sufficient manner a rather exotic object of study, namely contractively included subspaces;  we have considered the reproducing kernel approach as especially convenient. We give next some basic properties, following closely~\cite{S}. 

The dichotomy $b$ extreme/nonextreme appears soon. The general idea is that the extreme case has many features that are not far from the case of $b$ inner (the classical model spaces), while the nonextreme case is more exotic from this point of view. 

Originally, the de Branges--Rovnyak spaces have been developed in view of model theory, that is, giving a universal model for certain class of operators on Hilbert space. They have been in the shade for a few decades, as the much more popular and well developed model theory  of Sz--Nagy and Foias~\cite{NF} has gained the upper hand. It is  known to experts that  the two theories are equivalent, and we thought that a justification of the study of de Branges--Rovnyak spaces should include some presentation of their role as model spaces. It turns out that this is easier done for the extreme case, and we have chosen to present this case at the end of the notes.

As noted above, the basic reference for de Branges--Rovnyak spaces, that has been frequently used  in these notes, is the book of Sarason~\cite{S}. Some simple unproved results appear in the text as exercises; for the others, references  are indicated in the text at the relevant places, occasionally with a hint of the proof.

\section{Preliminaries}\label{se:prelim}

A comprehensive reference for all facts in this section is~\cite{N}, which has the advantage to contain both the necessary prerequisites from function theory (Part A, chapters 1--3) and from operator theory (part C, chapter 1).

If $H$ is a Hilbert space and $H'\subset H$ a closed subspace, we will write $P_{H'}$ for the orthogonal projection onto $H'$. The space of bounded linear operators from $H_1$ to $H_2$ is denoted $\BB(H_1, H_2)$; in case $H_1=H_2=H$ we write just $\BB(H)$. If $T\in\BB(H_1, H_2)$ is a contraction, we will denote by $D_T$ the selfadjoint operator $(I-T^*T)^{1/2}$ and by $\DD_T$ the closed subspace $\overline{D_T H_1}=\ker T^\perp$. Thus $\DD_T\subset H_1$ and $\DD_{T^*}\subset H_2$. Obviously $D_T|\DD_T$ is one-to-one.

\begin{exercise}\label{ex:defects}
We have $TD_T=D_{T^*}T$. In particular, $T$ maps $\DD_T$ into $\DD_{T^*}$.
\end{exercise}

We may also consider the domain and/or the range of $D_T$ to be $\DD_T$; by an abuse of notation all these operators will  still be denoted  by $D_T$. Note that the adjoint of $D_T:\DD_T\to H_1$ is $D_T:H_1\to \DD_T$.

\begin{exercise}\label{exr:defect of restriction}
If $T\in \BB(H)$ is a contraction, $H_1\subset H$ is a closed subset invariant by $T$, and we denote $T_1=T|H_1$, then $\DD_{T_1}=\overline{P_{H_1}\DD_T}$.
\end{exercise}

We denote by $L^2, L^\infty$  the Lebesgue spaces on the unit circle $\bbT$; we will also  meet their closed subspaces  $H^2\subset L^2$  and $H^\infty\subset L^\infty$ (the \emph{Hardy spaces}). The corresponding norms will be denoted by $\|\cdot\|_2$ and $\|\cdot\|_\infty$ respectively. As usual, $H^2$ and $H^\infty$ can be identified with their analytic extension inside the unit disc $\bbD$. We assume known basic facts about inner and outer functions. We will write $P_+:=P_{H^2}$ (the orthogonal projection in $L^2$). The symbols $\bf 0$ and $\bf 1$ will denote the constant functions that take this value.

Each function $\phi$ in $L^\infty$ acts as multiplication on $L^2$; the corresponding operator will be denoted by $M_\phi$, and we have $\|M_\phi\|=\|\phi\|_\infty$. In particular, if $\phi(z)=z$, we will write $Z=M_\phi$. Actually, the commutant of $Z$ (the class of all operators $T$ on $L^2$ such that $ZT=TZ$) coincides precisely with the class of all $M_\phi$ for $\phi\in L^\infty$. (Obviously we have to define $\phi=T{\bf 1}\in L^2$; a little work is needed to show that it is in $L^\infty$.)

The compression $P_+M_\phi P_+$ restricted to the space $H^2$ is called the Toeplitz operator with symbol $\phi$ and is denoted by $T_\phi$. Again we have $\|T_\phi\|=\|\phi\|_\infty$; moreover, if $\phi\in H^\infty$, then $T_\phi$ is one-to-one (this is a consequence of the brothers Riesz Theorem: a function in $H^2$ is $\not=0$ a.e.).  In particular, if $\phi(z)=z$, we will write $S=T_\phi$; its adjoint $S^*$ acts as
\begin{equation}\label{eq:action of S*}
(S^*f)(z)=\frac{f(z)-f(0)}{z}.
\end{equation} 
We have $T_\phi^*=T_{\bar \phi}$. 

As noted above, the multiplication operators commute; this is in general not true for the Toeplitz operators. 

\begin{exercise}\label{exr:multiplication of toeplitz}
If $\phi\in H^\infty$, or $\psi\in \overline{H^\infty}$, then $T_\psi T_\phi=T_{\psi\phi}$.
\end{exercise}

If $k_\lambda(z)=\frac{1}{1-\bar\lambda z}$ (a \emph{reproducing vector} in $H^2$---see more  on reproducing kernels in Subsection~\ref{sse:rep kernels} below), then for any $\phi\in H^\infty$ we have
\begin{equation}\label{eq:toeplitz acting on reproducing kernels}
T_{\bar \phi}k_\lambda=\overline{\phi(\lambda)}k_\lambda.
\end{equation}

\section{Introducing de Branges--Rovnyak spaces}

\subsection{Model spaces}\label{sse:model spaces}

Beurling's theorem says that any subspace of $H^2$ invariant by $S$ is of the form $u H^2$, with $u$ an inner function. 

\begin{exercise}\label{exr:s|thetaH2}
$S|u H^2$ is unitarily equivalent to $S$.
\end{exercise}

From some points of view, the orthogonal $\K_u=H^2\ominus u H^2$ is more interesting: it is a \emph{model space}. It is invariant by $S^*$, but $S^*|\K_u$ may behave very differently. Actually, we know exactly \emph{how differently}:

\begin{theorem}\label{th:unitary equivalence for inner function}
If $T$ is a contraction on a Hilbert space $H$, then the following are equivalent:
\begin{enumerate}
\item
$I-T^*T$ and $I-TT^*$ have rank one and $T^n$ tends strongly to 0.

\item
$T$ is unitarily equivalent to $S_u:=S^*|\K_u\in\BB(\K_u)$ for some inner function $u$.
\end{enumerate}
\end{theorem}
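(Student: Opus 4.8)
The plan is to prove the two implications by direct, hands-on arguments; this statement is the scalar (rank-one defect) case of the Sz.-Nagy--Foias functional model, but the general machinery is not needed here.

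For $(2)\Rightarrow(1)$ I would simply compute. With $u$ inner and $S_u=S^*|\K_u$, one has $S_u^*=P_{\K_u}S|\K_u$, and using \eqref{eq:action of S*}, the identity $P_{\K_u}\1=\1-\overline{u(0)}\,u$, and the facts that $S^*u\in\K_u$ and $\|S^*u\|_2^2=1-|u(0)|^2$, I would check that
\[
(I-S_u^*S_u)f=\<f,\,P_{\K_u}\1\>\,P_{\K_u}\1 ,\qquad (I-S_uS_u^*)f=\<f,\,S^*u\>\,S^*u .
\]
Both are positive operators of rank at most one, and of rank exactly one as soon as $\K_u\neq\{0\}$ (i.e.\ $u$ is not a unimodular constant), which we may assume. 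Strong convergence $S_u^n\to0$ is then immediate: $S_u^nf=(S^*)^nf$ for $f\in\K_u$, and $\|(S^*)^nf\|_2^2=\sum_{k\ge n}|\hat f(k)|^2\to0$ for every $f\in H^2$.

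For $(1)\Rightarrow(2)$ the idea is to build the model space by hand. First I would note that $\|D_Tx\|^2=\|x\|^2-\|Tx\|^2$ applied to $x=T^nh$, summed over $n$, telescopes; combined with $T^nh\to0$ this gives $\sum_{n\ge0}\|D_TT^nh\|^2=\|h\|^2$ for all $h\in H$. Fixing a unit vector $e_0$ spanning the one-dimensional space $\DD_T$, I would then define $V\colon H\to H^2$ by $(Vh)(z)=\sum_{n\ge0}\<D_TT^nh,e_0\>z^n$; the previous identity shows $Vh\in H^2$ with $\|Vh\|_2=\|h\|$, so $V$ is an isometry, and \eqref{eq:action of S*} gives at once $VT=S^*V$. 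Hence $\MM:=VH$ is a closed subspace of $H^2$ invariant under $S^*$, so $\MM^\perp$ is $S$-invariant and Beurling's theorem yields $\MM^\perp=uH^2$ for some inner $u$, unless $\MM^\perp=\{0\}$. The second alternative would make $V$ unitary and $T$ unitarily equivalent to $S^*$, forcing $I-TT^*\cong I-S^*S=0$ and contradicting $\rank(I-TT^*)=1$; so $\MM=\K_u$ with $u$ a non-constant inner function, and $V\colon H\to\K_u$ is a surjective isometry intertwining $T$ with $S^*|\K_u=S_u$.

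The only genuinely laborious part is the bundle of identities in $(2)\Rightarrow(1)$. The real content lies in $(1)\Rightarrow(2)$, but there is no serious obstacle once one sees the right map: the hypothesis $T^n\to0$ strongly is exactly what makes $V$ isometric with closed range, after which Beurling's theorem does everything, the second rank-one hypothesis serving only to rule out the degenerate case $\MM=H^2$. If one preferred to avoid the ad hoc construction, the same conclusion follows from the functional model --- (1) says that $T$ is a completely nonunitary contraction whose characteristic function is a scalar Schur function, $T^n\to0$ forces that function to be inner, and the model operator of a scalar inner function is unitarily equivalent to $S^*|\K_u$ --- but establishing those three points is more work than the argument above.
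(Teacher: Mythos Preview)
Your argument is correct. Both implications are handled cleanly: the defect computations in $(2)\Rightarrow(1)$ check out (in particular $(I-S_uS_u^*)f=\<f,S^*u\>S^*u$ follows from the fact that $\bar u f\in\overline{H^2_0}$ for $f\in\K_u$, so $T_{\bar u}(zf)$ is the constant $\<f,S^*u\>$), and the construction of $V$ in $(1)\Rightarrow(2)$ is sound, with the rank-one hypothesis on $I-TT^*$ used precisely to exclude $\MM=H^2$.

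The paper, however, does not prove this theorem directly: it cites the Sz.-Nagy--Foias theory and later, in Exercise~\ref{exr:general model implies special model}, asks the reader to deduce it from Theorem~\ref{th:model property for b extreme}. That route builds the full dilation $W$ on $\HHH$, extracts the symbol $b$ from the two embeddings $\omega_\pm$, and then specializes to the inner case, so it is considerably heavier machinery. Your proof is genuinely different and more economical: you bypass the dilation entirely, produce the intertwining isometry $V$ in one line from the telescoping identity $\sum_n\|D_TT^nh\|^2=\|h\|^2$, and let Beurling's theorem do the rest. What the paper's approach buys is generality --- the same dilation argument yields the model for all extreme $b$ --- whereas your argument is tailored to the inner case (the hypothesis $T^n\to0$ is exactly what makes $V$ isometric rather than merely contractive). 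For the purpose of proving just this statement, your proof is the shorter and more transparent one.
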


Theorem~\ref{th:unitary equivalence for inner function} is a particular case of the general Sz.-Nagy--Foias theory of contractions (see, for instance, the revised edition~\cite{NF}  of the original monography); one can find it also in~\cite{N}.

So $S_u$ is a \emph{model operator} for a certain class of contractions. We will meet in this course  model operators for a more general class.

\subsection{Reproducing kernels}\label{sse:rep kernels}

We  introduce a larger class of spaces that include $\K_u$ for inner $u$; this will be done by means of reproducing kernels. A Hilbert space $\RR$ of functions on a set $X$ is called a \emph{reproducing kernel space} (RKS) if the evaluations at points of $X$ are continuous; we will always have $X=\bbD$. By Riesz's representation theorem it follows then that for each $\lambda\in\bbD$ there exists a function $\lll ^\RR_\lambda\in\RR$, called the \emph{reproducing vector} for $\lambda$, such that $f(\lambda)= \< f, \lll ^\RR_\lambda \>$. The function of two variables $\LLL^\RR(z,\lambda):=\lll ^\RR_\lambda(z)= \< \lll ^\RR_\lambda, \lll ^\RR_z \>$ is called the \emph{reproducing kernel} of the space $\RR$. There is a one-to-one correspondence between RKS's and positive definite kernels (see for instance~\cite{Ar}).

\begin{exercise}\label{exr:reproducing kernel for subspace}
\begin{enumerate}
\item
If $\RR$ is a RKS, and $\RR_1\subset \RR$ is a closed subspace, then $\RR_1$ is also a RKS, and $\lll ^{\RR_1}_\lambda=P_{\RR_1}\lll ^\RR_\lambda$. 
\item
If $\RR=\RR_1\oplus \RR_2$, then
\begin{equation}\label{eq:reproducing kernels for orthogonal sums}
\LLL^\RR=\LLL^{\RR_1}\oplus \LLL^{\RR_2}.
\end{equation}
\end{enumerate}
\end{exercise}

All three  spaces discussed above have reproducing kernels, namely:
\[
\begin{matrix}
H^2& \longrightarrow &\frac{1}{1-\bar\lambda z},\medskip
\\
u H^2& \longrightarrow &\frac{\overline{u(\lambda)}u(z)}{1-\bar\lambda z},\medskip\\
\K_u &\longrightarrow &\frac{1-\overline{u(\lambda)}u(z)}{1-\bar\lambda z},
\end{matrix}
\]
and one can check that equality~\eqref{eq:reproducing kernels for orthogonal sums} is satisfied. 
\smallskip

Our plan is to obtain RKSs with similar formulas, but replacing the \emph{inner} function $u$ with an \emph{arbitrary} function $b$ in the unit ball of $H^\infty$. That is, we want spaces with kernels $\frac{\overline{b(\lambda)}b(z)}{1-\bar\lambda z}$ and $\frac{1-\overline{b(\lambda)}b(z)}{1-\bar\lambda z}$.

\smallskip

Of course it is not obvious that such RKSs exist. Then, if they exist, we want to identify them concretely, hoping to relate them to the familiar space~$H^2$.

Things are simpler for the first kernel. Note first the  next (general) exercise.

\begin{exercise}\label{exr:modifying kernel by scaling}
If $\LLL(z,\lambda)$ is a positive kernel on $X\times X$ and $\phi:X\to\bbC$, then $\phi(z)\overline{\phi(\lambda)}\LLL(z,\lambda)$ is a positive kernel.  
\end{exercise}

So $\frac{\overline{b(\lambda)}b(z)}{1-\bar\lambda z}$ is the kernel of \emph{some} space $\RR$; but we would like to know it more concretely. The case $b=u$ inner suggests that a good candidate might be $bH^2$. Now, we already have a problem: if $b$ is a general function, $bH^2$ might not be closed in $H^2$, so it is not a genuine Hilbert space. But let us be brave and go on:  we want 
\[
\lll ^\RR_\lambda(z)=\overline{b(\lambda)}b(z)k_\lambda.
\]
Since the reproducing kernel property should be valid in $\RR$, we must have, for any $f\in H^2$,
\[
b(\lambda)f(\lambda)= \< bf,\lll ^\RR_\lambda(z) \>_\RR 
=b(\lambda) \< bf,b k_\lambda \>_\RR
\] 
and therefore 
\[
f(\lambda)=\< bf,b k_\lambda \>_\RR.
\]
On the other hand, since $f\in H^2$, we have $f(\lambda)= \< f,k_z \>_{H^2}$. 

We have now arrived at the crucial point. If $b$ is inner, then $\< bf,b k_z \>_{H^2}= \< f,k_z \>_{H^2}$ and everything is fine: the scalar product in $bH^2$ is the usual scalar product in $H^2$. But, in the general case, we have to define a \emph{different} scalar product on $\RR=bH^2$, by the formula
\begin{equation}\label{eq:scalar product on bH2}
\< bf,bg \>_{\RR}:= \< f,g \>_{H^2}.
\end{equation}

This appears to solve the problem. Since $bf_1=bf_2$ implies $f_1=f_2$, formula~\eqref{eq:scalar product on bH2} is easily shown to define a scalar product on $bH^2$. We will denote the corresponding Hilbert space by $\MM(b)$. Let us summarize the results obtained.

\begin{theorem}\label{th:about M(b)}
$\MM(b)$, defined as $bH^2$ endowed with the scalar product~\eqref{eq:scalar product on bH2}, is a Hilbert space, which as a set is a linear subspace (in general not closed) of $H^2$. Its reproducing kernel is $\frac{\overline{b(\lambda)}b(z)}{1-\bar\lambda z}$, and the inclusion $\iota:\MM(b)\to H^2$ is a contraction.
$\MM(b)$ is invariant by $S$, and $S$ acts as an isometry on $\MM(b)$.
\end{theorem}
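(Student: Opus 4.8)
The plan is to route everything through the single multiplication map $M_b\colon H^2\to H^2$, $M_b g=bg$, by showing that it is a \emph{unitary} operator from $H^2$ onto $\MM(b)$; all five assertions then drop out by transport of structure. (The degenerate case $b\equiv 0$, for which $\MM(b)=\{0\}$ and every claim is vacuous, is set aside; note that~\eqref{eq:scalar product on bH2} is only meaningful when $b\not\equiv 0$.)

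\textbf{Step 1: $\MM(b)$ is a Hilbert space sitting inside $H^2$.} Since $b\in H^\infty$ we have $bH^2\subset H^2$, and it is clearly a linear subspace; that it need not be closed is seen on examples (e.g.\ $b$ outer and non-invertible in $H^\infty$) and needs no proof. Because $b\not\equiv 0$, the brothers Riesz theorem --- a nonzero $H^2$ function vanishes only on a null set --- gives $bg_1=bg_2\Rightarrow g_1=g_2$, so $M_b$ is a linear bijection of $H^2$ onto $bH^2$; by the very definition~\eqref{eq:scalar product on bH2} it is isometric, hence a unitary $M_b\colon H^2\to\MM(b)$. Completeness of $\MM(b)$ is then inherited from that of $H^2$.

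\textbf{Step 2: the reproducing kernel and the contraction $\iota$.} First $\MM(b)$ is genuinely an RKS: for $g\in H^2$, $\lambda\in\bbD$,
\[
|(bg)(\lambda)|=|b(\lambda)|\,|\<g,k_\lambda\>_{H^2}|\le |b(\lambda)|\,\|k_\lambda\|_2\,\|g\|_{H^2}=|b(\lambda)|\,\|k_\lambda\|_2\,\|bg\|_{\MM(b)},
\]
so evaluation at $\lambda$ is continuous. I then check that $\overline{b(\lambda)}\,bk_\lambda$ is the reproducing vector at $\lambda$: for all $g\in H^2$, using~\eqref{eq:scalar product on bH2},
\[
\<bg,\overline{b(\lambda)}\,bk_\lambda\>_{\MM(b)}=b(\lambda)\<g,k_\lambda\>_{H^2}=b(\lambda)g(\lambda)=(bg)(\lambda),
\]
whence the kernel is $\overline{b(\lambda)}b(z)k_\lambda(z)=\frac{\overline{b(\lambda)}b(z)}{1-\bar\lambda z}$. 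For $\iota\colon\MM(b)\to H^2$, note $\iota\circ M_b=T_b$ (multiplication by $b$) on $H^2$, which is a contraction since $\|T_b\|=\|b\|_\infty\le 1$; as $M_b$ is unitary, $\iota$ is a contraction too. Equivalently, $\|bg\|_{H^2}\le\|b\|_\infty\|g\|_{H^2}=\|b\|_\infty\|bg\|_{\MM(b)}\le\|bg\|_{\MM(b)}$.

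\textbf{Step 3: $S$-invariance and isometry.} The map $M_b$ intertwines $S$ on $H^2$ with multiplication by $z$ on $\MM(b)$: $M_b(Sg)=b(zg)=z(bg)=S(M_bg)\in bH^2=\MM(b)$. Hence $\MM(b)$ is $S$-invariant, and $S|\MM(b)=M_b\,S\,M_b^{-1}$ is unitarily equivalent to the isometry $S$ on $H^2$, so it is an isometry. I do not expect a genuine obstacle: the only non-formal inputs are the brothers Riesz theorem (already needed to make~\eqref{eq:scalar product on bH2} well defined), the bounds $|b(\lambda)|\le 1$ and $\|T_b\|=\|b\|_\infty$, and the fact that $S$ is isometric on $H^2$. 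The one point deserving a moment's care is keeping the two meanings of ``$bg$'' apart --- as an abstract vector of $\MM(b)$ carrying its own norm versus as an honest element of $H^2$ --- which is precisely the bookkeeping that the unitary $M_b$ performs.
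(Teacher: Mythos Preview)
Your argument is correct and follows essentially the same route as the paper: both rely on the fact that $f\mapsto bf$ is a unitary from $H^2$ onto $\MM(b)$, then deduce completeness, the kernel formula, the contractivity of $\iota$ via $\|bf\|_{H^2}\le\|f\|_{H^2}=\|bf\|_{\MM(b)}$, and the $S$-isometry from $z(bf)=b(zf)$. Your version is a bit more explicit (you verify that $\MM(b)$ is an RKS before identifying the kernel, and you single out the degenerate case $b\equiv 0$), but the substance is identical.
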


\begin{proof}
From~\eqref{eq:scalar product on bH2} it follows that the map $f\mapsto bf$ is isometric from $H^2$ onto $\MM(b)$, whence $\MM(b)$ is complete. 
 The formula for the reproducing kernel has been proved (in fact, it lead to the definition of the space $\MM(b)$). We have 
\[
\|\iota(bf)\|_{H^2}=\|bf\|_{H^2}\le \|f\|_{H^2}=\|bf\|_{\MM(b)},
\]
and thus $\iota$ is a contraction.

Finally $\MM(b)$ is invariant by $S$ since $z(bf)=b(zf)$, and
\[
\|zbf\|_{\MM(b)}=\|b(zf)\|_{\MM(b)}=\|zf\|_{H^2}=\|f\|_{H^2}= \|bf\|_{\MM(b)},
\]
and thus the restriction of $S$ is an isometry.
\end{proof}

This settles the case of the kernel$\frac{\overline{b(\lambda)}b(z)}{1-\bar\lambda z}$. To discuss  $\frac{1-\overline{b(\lambda)}b(z)}{1-\bar\lambda z}$ is slightly more complicated, and a preliminary discussion is needed.

\subsection{Contractively included subspaces}
Let  $T:E\to H$ be a bounded one-to-one operator. Define on the image $T(E)$ a scalar product $ \< \cdot,\cdot \>'$ by the formula
\begin{equation}\label{eq:pre-scalar product}
\< T\xi,T\eta \>' := \< \xi,\eta \>_E.
\end{equation}
Then $T$ is a unitary operator from $E$ to $T(E)$ endowed with $ \< \cdot,\cdot \>'$. The space obtained is complete; we will denote it by $M(T)$. The linear space $M(T)$ is contained as a set in $H$, and the inclusion is a contraction if and only if $T$ is a contraction. In this case  the space $M(T)$ will be called a \emph{contractively included subspace} of $H$, and the scalar product will be denoted $ \< \cdot,\cdot \>_{M(T)}$.

A slight modification is needed in case $T$ is not one-to-one; then~\eqref{eq:pre-scalar product} cannot be used directly since there $T\xi$ does not determine $\xi$ uniquely. We may however recapture this uniqueness and use~\eqref{eq:pre-scalar product} if we require that $\xi,\eta\in\ker T^\perp$; then $T$ becomes a unitary from $\ker T^\perp$ to $M(T)$.

In almost all cases  in this course the corresponding contraction $T$ will be one-to-one, and thus we will apply directly~\eqref{eq:pre-scalar product}. The only exception appears in Theorem~\ref{th:invariant to adjoints}, when we will use Lemma~\ref{le:Douglas lemma} below, with no direct reference to the scalar product.

We have already met a particular case of this notion: with the above notations, we have $\MM(b)=M(T_b)$. Remember that, since  $b\in H^\infty$, the operator $T_b$ is one-to-one.

The following is a basic result that is used when we have to deal with two contractively embedded subspaces. It is essentially contained in~\cite{D}.

\begin{lemma}\label{le:Douglas lemma}
Suppose $T_1:E_1\to H$, $T_2:E_2\to H$ are two contractions. Then:
\begin{enumerate}
\item
The space $M(T_1)$ is contained contractively in $M(T_2)$ (meaning that $M(T_1)\subset M(T_2)$ and the inclusion is a contraction) if and only if $T_1T_1^*\le T_2T_2^*$.
\item
The spaces $M(T_1)$ and $M(T_2)$ coincide as Hilbert spaces (that is, they are equal as sets, and the scalar product is the same) if and only if $T_1T_1^*= T_2T_2^*$. 
\item
$T:H\to H$ acts contractively on $M(T_1)$ (meaning that $T(M(T_1))\subset M(T_1)$ and $T|M(T_1)$ is a contraction) if and only if $TT_1T_1^*T^*\le T_1T_1^*$.
\end{enumerate}
\end{lemma}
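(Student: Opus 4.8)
The plan is to reduce everything to the classical Douglas factorization lemma, which states that for bounded operators $A, B$ on Hilbert space, $AA^* \le BB^*$ holds if and only if there is a contraction $C$ with $A = BC$, and in that case one may take $C$ to map into $\overline{\operatorname{ran} B^*}$ uniquely. The key observation linking this to contractively included subspaces is that, by the very definition of $M(T)$, an element $h \in H$ belongs to $M(T)$ with $\|h\|_{M(T)} \le 1$ if and only if $h = T\xi$ for some $\xi \in E$ with $\|\xi\| \le 1$; equivalently, $M(T)$ with its closed unit ball is exactly $\{T\xi : \|\xi\|_E \le 1\}$. So containments and norm comparisons between $M(T_1)$ and $M(T_2)$ translate into statements about inclusions of ranges of $T_1$ and $T_2$ with control on the preimages.

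For part (1): if $T_1 T_1^* \le T_2 T_2^*$, Douglas gives a contraction $C: E_1 \to E_2$ with $T_1 = T_2 C$. Then for any $\xi \in E_1$, $T_1\xi = T_2(C\xi) \in M(T_2)$, and $\|T_1\xi\|_{M(T_2)} \le \|C\xi\|_{E_2} \le \|\xi\|_{E_1} = \|T_1\xi\|_{M(T_1)}$ (using that $\|T_2\eta\|_{M(T_2)} \le \|\eta\|$ with equality on $\ker T_2^\perp$, and the inequality is all we need). Hence $M(T_1) \subset M(T_2)$ contractively. Conversely, if $M(T_1) \subset M(T_2)$ contractively, then composing the isometry $T_1: \ker T_1^\perp \to M(T_1)$ with the contractive inclusion $M(T_1) \hookrightarrow M(T_2)$ and the inverse isometry $M(T_2) \to \ker T_2^\perp$ produces a contraction $C$ with $T_1 = T_2 C$ on $\ker T_1^\perp$, hence everywhere; then $T_1 T_1^* = T_2 C C^* T_2^* \le T_2 T_2^*$. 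Part (2) is immediate from (1) applied in both directions: equality of the positive operators gives mutual contractive inclusion, and a Banach-space argument (mutual contractive inclusions force equal norms, since the inclusion maps are inverse to each other and both contractive) yields that the spaces coincide as Hilbert spaces; conversely coinciding spaces give $T_1 T_1^* \le T_2 T_2^*$ and the reverse.

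For part (3): $T$ acts contractively on $M(T_1)$ exactly when $T$ restricted to $M(T_1)$ maps into $M(T_1)$ and is a contraction there. Now $T|M(T_1)$ maps $M(T_1)$ into $H$, and the composition $\xi \mapsto T T_1 \xi$ defines an operator from $E_1$ to $H$ whose associated contractively-included-subspace picture is governed by $(T T_1)(T T_1)^* = T T_1 T_1^* T^*$. Saying $T(M(T_1)) \subseteq M(T_1)$ with the restriction a contraction is precisely saying $M(T T_1) \subseteq M(T_1)$ contractively — because $M(T T_1)$ is, as a set with its norm, the image of the unit ball of $M(T_1)$ under $T$ — so part (1) applied to the pair $T T_1, T_1$ gives the equivalence with $T T_1 T_1^* T^* \le T_1 T_1^*$. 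The one point needing a line of care here is that $M(T T_1)$ genuinely equals the space "$T(M(T_1))$ with the quotient/pushforward norm," which follows since $T_1: \ker T_1^\perp \to M(T_1)$ is unitary and composing with $T$ gives the defining map of $M(T T_1)$.

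The main obstacle is not any single hard estimate but rather keeping the bookkeeping honest when $T_1$ or $T_2$ fails to be one-to-one: one must consistently work with $\ker T_i^\perp$ so that the maps $T_i$ are unitaries onto $M(T_i)$, and check that the contraction $C$ produced by Douglas's lemma can be arranged (or extended by zero on $\ker T_1$) to satisfy $T_1 = T_2 C$ as operators on all of $E_1$. Once that is set up cleanly, each of the three parts is a short chain of implications.
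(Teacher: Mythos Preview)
Your proof is correct. The paper does not give its own proof of this lemma, merely stating that it ``is essentially contained in~\cite{D}'' (Douglas's paper on majorization and factorization); your argument is precisely a clean unpacking of that reference, reducing each part to the Douglas factorization criterion $AA^*\le BB^*\iff A=BC$ for some contraction $C$, and your bookkeeping with $\ker T_i^\perp$ and the identification of $M(TT_1)$ with the pushforward of $M(T_1)$ under $T$ is handled correctly.
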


It is worth at this point to note 
 the following theorem of de Branges and Rovnyak~\cite{BR}, which is an analogue of Beurling's theorem. 

\begin{theorem}\label{th:deBranges-Rovnyak-Beurling}
Suppose $X\subset H^2$ is a contractively included subspace of $H^2$. The following are equivalent
\begin{enumerate}
\item
$X$ is invariant by $S$ and the restriction $S|X$ is an isometry (in the norm of $X$).

\item
There exist a function $b$ in the unit ball of $H^\infty$, such that $X=\MM(b)$.
\end{enumerate}

The function $b$ is determined up to a multiplicative constant of modulus~1.
\end{theorem}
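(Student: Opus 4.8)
The implication $(2)\Rightarrow(1)$ is exactly the content of Theorem~\ref{th:about M(b)}, so the plan is to concentrate on $(1)\Rightarrow(2)$ and then dispose of the uniqueness clause separately. I first record the routine preliminaries: since the inclusion $\iota\colon X\hookrightarrow H^2$ is a contraction and $H^2$ is a reproducing kernel space, evaluations are bounded on $X$, so $X$ is itself an RKS; and by hypothesis $V:=S|X$ is an isometry of the Hilbert space $X$. The one observation that needs a word is that $V$ is a \emph{pure} isometry: since $V^nX=S^nX$ is contained in $z^nH^2$ and $\bigcap_n z^nH^2=\{0\}$ in $H^2$, while $\iota$ is injective, we get $\bigcap_nV^nX=\{0\}$.

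The strategy for $(1)\Rightarrow(2)$ is to identify $V$ via Wold's theorem and then transport $\iota$ through that identification. So I would write $V\cong S_\EE$, the shift on the $\EE$-valued Hardy space $H^2(\EE)$, where $\EE=X\ominus VX$ is the wandering subspace; let $U\colon H^2(\EE)\to X$ be the implementing unitary, $US_\EE=VU$. Since $\iota V=S\iota$, the composition $\iota U\colon H^2(\EE)\to H^2$ is a bounded operator intertwining $S_\EE$ with $S$, so, computing its action on the constant functions of $H^2(\EE)$ and using the intertwining relation, it acts coordinatewise by multiplication: $\iota U(\psi\otimes e)=\psi\cdot\iota(e)$ for $\psi\in H^2$ and $e\in\EE\subset X$. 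Now comes the decisive point: $\iota$, hence $\iota U$, is injective, yet a map of this form with $\dim\EE\ge2$ always has nontrivial kernel. Indeed, picking two orthonormal wandering vectors $e_1,e_2$ and setting $\phi_i:=\iota(e_i)\in H^2$ (nonzero, since $\iota$ is injective), the function $z\mapsto\phi_2(z)e_1-\phi_1(z)e_2$ is a nonzero element of $H^2(\EE)$ which is sent by $\iota U$ to $\phi_2\phi_1-\phi_1\phi_2=0$. Hence $\dim\EE\le1$. If $\dim\EE=0$ then $X=\{0\}$ and $b=\mathbf 0$ works; if $\dim\EE=1$, identifying $\EE$ with $\bbC$ turns $\iota U$ into multiplication by a scalar function $b\in H^2$, with $X=\iota U(H^2)=bH^2$ and $\|bf\|_X=\|f\|_{H^2}$, i.e.\ $X=\MM(b)$ in the sense of~\eqref{eq:scalar product on bH2}; and since $f\mapsto bf$ is then a contraction $H^2\to H^2$, $b$ lies in the unit ball of $H^\infty$ (from $\langle b\,k_\lambda,k_\lambda\rangle=b(\lambda)\|k_\lambda\|^2$ one reads off $|b(\lambda)|\le1$).

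For uniqueness, if $\MM(b_1)=\MM(b_2)=M(T_{b_1})=M(T_{b_2})$ as Hilbert spaces, then Lemma~\ref{le:Douglas lemma}(2) gives $T_{b_1}T_{b_1}^{*}=T_{b_2}T_{b_2}^{*}$. Applying both sides to $k_\lambda$, using $T_{\bar b}k_\lambda=\overline{b(\lambda)}k_\lambda$ from~\eqref{eq:toeplitz acting on reproducing kernels} and $T_bk_\lambda=bk_\lambda$, yields $\overline{b_1(\lambda)}\,b_1\,k_\lambda=\overline{b_2(\lambda)}\,b_2\,k_\lambda$ in $H^2$, so $\overline{b_1(\lambda)}b_1(z)=\overline{b_2(\lambda)}b_2(z)$ for all $z,\lambda\in\bbD$; taking $\lambda$ with $b_1(\lambda)\ne0$ gives $b_1=cb_2$, and substituting back forces $|c|=1$.

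The step I expect to be the real obstacle is the multiplicity bound $\dim\EE\le1$, which is where the hypotheses actually bite: the Wold-decomposition-plus-injectivity argument above is the cleanest route I see. An equivalent but, to my eye, less transparent formulation is that the reproducing kernel $(1-\bar\lambda z)\,\LLL^X(z,\lambda)$ of $X\ominus SX$ must have rank one; deriving that directly from $\LLL^X(z,\lambda)\preceq(1-\bar\lambda z)^{-1}$ is not obviously easier, since one cannot simply multiply that kernel inequality through by $(1-\bar\lambda z)$.
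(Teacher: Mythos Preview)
The paper does not actually prove Theorem~\ref{th:deBranges-Rovnyak-Beurling}; it is stated without proof and attributed to de Branges and Rovnyak~\cite{BR}, so there is nothing in the text to compare your argument against.

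On its own merits your proof is correct and is in fact the standard route. The purity of $V=S|X$ follows exactly as you say from $\bigcap_n z^nH^2=\{0\}$ and the injectivity of $\iota$; the Wold decomposition then reduces the problem to analysing a shift-intertwining contraction $\iota U\colon H^2(\EE)\to H^2$, which is necessarily an analytic multiplier. Your injectivity argument forcing $\dim\EE\le1$ via the kernel element $\phi_2\otimes e_1-\phi_1\otimes e_2$ is the clean way to do this step, and the uniqueness clause via Lemma~\ref{le:Douglas lemma}(2) together with~\eqref{eq:toeplitz acting on reproducing kernels} is efficient. One cosmetic point: in the uniqueness argument you should remark that if $b_1=\mathbf{0}$ then $\MM(b_1)=\{0\}$ forces $b_2=\mathbf{0}$ as well, so the choice of $\lambda$ with $b_1(\lambda)\ne0$ is available whenever the statement has content.
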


Suppose now that $H$ is a reproducing kernel Hilbert space, with kernel $\LLL(z,\lambda)$. We may obtain for the reproducing vectors of $M(T)$ a formula similar to the particular case from the previous subsection.

\begin{lemma}\label{le:repro kernel for contractively included}
Suppose $T:E\to H$ is one-to-one. With the above notations, we have
\[
\lll_\lambda^{M(T)}=TT^* \lll_\lambda^H.
\]
\end{lemma}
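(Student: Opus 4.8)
The plan is to compute, for an arbitrary $f\in M(T)$, the value $f(\lambda)$ in two ways and match the results against the defining property of the reproducing vector.

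First I would record a preliminary observation: $M(T)$ is itself a reproducing kernel space, so that $\lll_\lambda^{M(T)}$ is meaningful. Indeed, writing $f=T\xi$ one has $\|f\|_H=\|T\xi\|_H\le \|T\|\,\|f\|_{M(T)}$, so the set-theoretic inclusion $M(T)\hookrightarrow H$ is bounded; composing it with the continuous evaluation functionals of the RKS $H$ shows that evaluations are continuous on $M(T)$ as well.

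Next, fix $\lambda\in\bbD$ and $f\in M(T)$, and write $f=T\xi$ with $\xi\in E$ (uniquely determined, since $T$ is one-to-one). On the one hand, viewing $f$ as an element of $H$,
\[
f(\lambda)= \< f,\lll_\lambda^H \>_H = \< T\xi,\lll_\lambda^H \>_H = \< \xi,T^*\lll_\lambda^H \>_E.
\]
On the other hand, since $\lll_\lambda^{M(T)}\in M(T)=T(E)$, I would write $\lll_\lambda^{M(T)}=T\eta_\lambda$ and use that $T$ is unitary from $E$ onto $M(T)$ (formula~\eqref{eq:pre-scalar product}) to get
\[
f(\lambda)= \< f,\lll_\lambda^{M(T)} \>_{M(T)} = \< T\xi,T\eta_\lambda \>_{M(T)} = \< \xi,\eta_\lambda \>_E.
\]
Since $\xi\in E$ is arbitrary, comparing these two identities forces $\eta_\lambda=T^*\lll_\lambda^H$, hence $\lll_\lambda^{M(T)}=T\eta_\lambda=TT^*\lll_\lambda^H$, as claimed.

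This is a short argument in which the only real content is the bookkeeping of the adjoint $T^*$ together with the unitarity of $T:E\to M(T)$; I do not expect a genuine obstacle. The one point that must be stated carefully is the preliminary remark that $M(T)$ is a reproducing kernel space — without it the symbol $\lll_\lambda^{M(T)}$ is undefined — and, relatedly, the observation that $T^*\lll_\lambda^H$ indeed lies in $E$, so that $TT^*\lll_\lambda^H$ is a legitimate element of $M(T)$.
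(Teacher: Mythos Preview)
Your proof is correct and follows essentially the same route as the paper: both arguments boil down to the single computation $\< T\xi, TT^*\lll_\lambda^H\>_{M(T)}=\<\xi, T^*\lll_\lambda^H\>_E=\<T\xi,\lll_\lambda^H\>_H=(T\xi)(\lambda)$, using the unitarity of $T:E\to M(T)$ together with the definition of the adjoint. The only cosmetic difference is that the paper verifies the reproducing property of $TT^*\lll_\lambda^H$ directly, whereas you first argue that $M(T)$ is an RKS and then identify the (a~priori existing) kernel---but the content is the same.
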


\begin{proof}
We have, using~\eqref{eq:pre-scalar product},
\[
\< Tf,TT^* \lll_\lambda^H \>_{M(T)} = \< f ,T^* \lll_\lambda^H \>_H=\< Tf , \lll_\lambda^H \>_H=(Tf)(\lambda)= \< Tf,\lll_\lambda^{M(T)} \>_{M(T)},
\]
which proves the theorem.
\end{proof}

In case $H=H^2$, $T=T_b$, we recapture the previous result: $M(T_b)=\MM(b)$ and, using~\eqref{eq:toeplitz acting on reproducing kernels},
\[
\lll_\lambda^{\MM(b)}=T_b T_b^* k_\lambda= b \overline{b(\lambda)} k_\lambda.
\]

\subsection{The complementary space}

Remember that our current purpose is to find, if possible, an RKS with kernel $\frac{1-\overline{b(\lambda)}b(z)}{1-\bar\lambda z}$. Let us note that
\[
\frac{1-\overline{b(\lambda)}b(z)}{1-\bar\lambda z}= \frac{1}{1-\bar\lambda z}- \frac{\overline{b(\lambda)}b(z)}{1-\bar\lambda z}=\LLL^{H^2}(z, \lambda)- \LLL^{\MM(b)}(z, \lambda).
\]
Can we obtain in the general case a formula for such a difference? The answer is positive.

\begin{lemma}\label{le:complementary space}
With the above notations,
\[
\LLL^{H}(z, \lambda)-\LLL^{M(T)}(z, \lambda)= \LLL^{M(D_{T^*})}(z, \lambda),
\]
where $D_{T^*}:\DD_{T^*}\to H$.
\end{lemma}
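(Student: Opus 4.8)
The plan is to identify the reproducing kernel of $M(D_{T^*})$ using Lemma~\ref{le:repro kernel for contractively included}, and then verify that the resulting kernel equals the difference $\LLL^H - \LLL^{M(T)}$. First I would apply Lemma~\ref{le:repro kernel for contractively included} to the operator $D_{T^*}:\DD_{T^*}\to H$. That lemma requires a one-to-one operator, and indeed $D_{T^*}$ restricted to $\DD_{T^*}=\ker D_{T^*}^\perp$ is one-to-one (as noted in the preliminaries, $D_T|\DD_T$ is always injective), so the lemma applies and gives
\[
\lll_\lambda^{M(D_{T^*})} = D_{T^*}D_{T^*}^* \lll_\lambda^H.
\]
Now the key point is the identity between the adjoint of $D_{T^*}$ viewed as a map $\DD_{T^*}\to H$ and $D_{T^*}$ as a map $H\to\DD_{T^*}$: as the text observes, these are adjoint to each other. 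Hence $D_{T^*}D_{T^*}^*$, as an operator on $H$, coincides with $D_{T^*}^2 = I - TT^*$ (where $D_{T^*}^2$ is computed in $\BB(H)$, and the restriction/corestriction to $\DD_{T^*}$ is harmless since $D_{T^*}$ annihilates the orthogonal complement anyway). Therefore
\[
\lll_\lambda^{M(D_{T^*})} = (I-TT^*)\lll_\lambda^H = \lll_\lambda^H - TT^*\lll_\lambda^H.
\]

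Next I would combine this with Lemma~\ref{le:repro kernel for contractively included} applied to $T$ itself, which gives $\lll_\lambda^{M(T)} = TT^*\lll_\lambda^H$. Subtracting, we obtain $\lll_\lambda^{M(D_{T^*})} = \lll_\lambda^H - \lll_\lambda^{M(T)}$ as elements of $H$. Evaluating at a point $z\in X$ (recall all these spaces are spaces of functions on $X$, and the reproducing vectors are honest functions), this reads
\[
\LLL^{M(D_{T^*})}(z,\lambda) = \LLL^H(z,\lambda) - \LLL^{M(T)}(z,\lambda),
\]
which is exactly the claimed identity. One should be slightly careful that $M(D_{T^*})$, $M(T)$, and $H$ are all realized as spaces of functions on the same set $X$ so that the difference of kernels makes sense pointwise; since $M(T)\subset H$ and $M(D_{T^*})\subset H$ as sets (the inclusions being contractions because $T$ and $D_{T^*}$ are contractions), this is automatic.

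The main obstacle, modest as it is, is bookkeeping about which space $D_{T^*}$ is regarded as acting between, and correctly matching $D_{T^*}D_{T^*}^*$ (with $D_{T^*}:\DD_{T^*}\to H$) to $I-TT^*$ (an operator on $H$). Once the convention from the preliminaries—that the adjoint of $D_{T^*}:\DD_{T^*}\to H$ is $D_{T^*}:H\to\DD_{T^*}$—is invoked, this is immediate, and the rest is a two-line computation with reproducing kernels. There is no analytic difficulty here; everything is a formal consequence of Lemma~\ref{le:repro kernel for contractively included} and the defect-operator identities.
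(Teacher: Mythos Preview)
Your proof is correct and follows essentially the same approach as the paper: both arguments apply Lemma~\ref{le:repro kernel for contractively included} twice (once to $T$ and once to $D_{T^*}:\DD_{T^*}\to H$) and use the identity $D_{T^*}D_{T^*}^*=D_{T^*}^2=I-TT^*$. The only cosmetic difference is that the paper carries out the computation at the level of inner products of reproducing vectors, whereas you work directly with the reproducing vectors as elements of $H$ and then evaluate pointwise; the content is the same.
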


\begin{proof}
Using Lemma~\ref{le:repro kernel for contractively included}, we have
\begin{align*}
\LLL^{H}(z, \lambda)-\LLL^{M(T)}(z, \lambda)& = \< \lll^H_\lambda, \lll^H_z \>_{H} -  \< \lll^{M(T)}_\lambda, \lll^{M(T)}_z \>_{M(T)}\\
&= \< \lll^H_\lambda, \lll^H_z \>_{H} -  \< TT^*\lll^{H)}_\lambda,TT^* \lll^{H}_z \>_{M(T)}\\
&=\< \lll^H_\lambda, \lll^H_z \>_{H} - \< T^*\lll^{H)}_\lambda,T^* \lll^{H}_z \>_{E}\\
&= \< \lll^H_\lambda,(I-TT^*) \lll^H_z \>_{H} 
=\< D_{T^*}\lll^H_\lambda, D_{T^*} \lll^H_z \>_{H}\\
&=\< D_{T^*}^2\lll^H_\lambda, D_{T^*}^2 \lll^H_z \>_{M(D_{T^*})},
\end{align*}
(the last equality being a consequence of the fact  that $D_{T^*}$ is one-to-one as an operator from $\DD_{T^*}$ into $H$). Then Lemma~\ref{le:repro kernel for contractively included}, applied to $D_{T^*}$ instead of $T$, says that the last quantity is precisely $\LLL^{M(D_{T^*})}(z, \lambda)$.
\end{proof}

Since $D_{T^*}$ is a contraction,  the RKS corresponding to $\LLL^{H}(z, \lambda)-\LLL^{M(T)}(z, \lambda)$ is also a space contractively included in $H$; it is called the space \emph{complementary} to $M(T)$ and will be denoted by $\Comp(T)$.

\begin{exercise}\label{exr:kernels for orthogonal}
If $T$ is an isometry, then $M(T)$ is a usual subspace of $H$ (with the norm restricted), and $\Comp(T)$ is its orthogonal complement. 
\end{exercise}

If $x\in H$, then one can write
\begin{equation}\label{eq:decomposition M oplus C}
x=TT^* x+ D_{T^*}^2x.
\end{equation}

The first term in the right hand side is in $M(T)$, while the second is in $C(T)$. Moreover,
\[
\|TT^*x\|_{M(T)}^2=\|T^*x\|_H^2, \qquad \|D_{T^*}^2x\|_{C(T)}^2=\|D_{T^*}x\|_H^2,
\]
whence
\[
\|x\|^2=\|TT^*x\|_{M(T)}^2+\|D_{T^*}^2x\|_{C(T)}^2.
\]
In case $T$ is an isometry, $M(T)$ and $C(T)$ form an orthogonal decomposition of $H$, and~\eqref{eq:decomposition M oplus C} is the corresponding orthogonal decomposition of $H$. In the general case $M(T)$ and $C(T)$ may have a nonzero intersection, and so a decomposition $x=x_1+x_2$ with $x_1\in M(T)$, $x_2\in C(T)$ is no more unique.

\begin{exercise}\label{exr:uniqueness of decomposition}
If $T$ is a contraction, and $x=x_1+x_2$ with $x_1\in M(T)$, $x_2\in C(T)$, then 
\[
\|x\|^2\le \|x_1\|_{M(T)}^2+\|x_2\|_{C(T)}^2,
\]
and equality implies $x_1=TT^*x$, $x_2=D_{T^*}^2x$.
\end{exercise}

At this point we have achieved our first purpose.  Lemma~\ref{le:complementary space} applied to the case $H=K=H^2$ and $T=T_b$ yields the identification of the reproducing kernel corresponding to 
$\frac{1-\overline{b(\lambda)}b(z)}{1-\bar\lambda z}$.

\begin{theorem}\label{th:identification of de Branges}
The RKS with kernel $\frac{1-\overline{b(\lambda)}b(z)}{1-\bar\lambda z}$ is $M(D_{T^*_b})$. It is a contractively included subspace of $H^2$ that will be denoted $\HH(b)$ and called the \emph{de Branges--Rovnyak} space associated to the function $b$ in the inner ball of $H^\infty$.
\end{theorem}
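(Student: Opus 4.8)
The plan is to deduce Theorem~\ref{th:identification of de Branges} as an essentially immediate corollary of Lemma~\ref{le:complementary space}, specialized to the concrete situation $H=H^2$ and $T=T_b$. Recall from the discussion in Subsection~\ref{sse:rep kernels} that $H^2$ is a reproducing kernel space with kernel $\LLL^{H^2}(z,\lambda)=\frac{1}{1-\bar\lambda z}$, and that $\MM(b)=M(T_b)$ (using that $b\in H^\infty$, so $T_b$ is one-to-one) has reproducing kernel $\LLL^{\MM(b)}(z,\lambda)=\frac{\overline{b(\lambda)}b(z)}{1-\bar\lambda z}$ by Theorem~\ref{th:about M(b)}.

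First I would write out the elementary identity already displayed at the start of Subsection~\ref{sse:rep kernels} (the complementary space),
\[
\frac{1-\overline{b(\lambda)}b(z)}{1-\bar\lambda z}
= \frac{1}{1-\bar\lambda z} - \frac{\overline{b(\lambda)}b(z)}{1-\bar\lambda z}
= \LLL^{H^2}(z,\lambda) - \LLL^{\MM(b)}(z,\lambda).
\]
Then I would invoke Lemma~\ref{le:complementary space} with $T=T_b$, which identifies this difference of kernels as $\LLL^{M(D_{T_b^*})}(z,\lambda)$, where $D_{T_b^*}:\DD_{T_b^*}\to H^2$. By the one-to-one correspondence between reproducing kernel spaces and positive definite kernels (cited from~\cite{Ar} in Subsection~\ref{sse:rep kernels}), the RKS with the prescribed kernel is therefore exactly $M(D_{T_b^*})$. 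Finally, since $D_{T_b^*}=(I-T_bT_b^*)^{1/2}$ is a contraction (as $T_b$ is a contraction, indeed $\|T_b\|=\|b\|_\infty\le 1$), the general construction of contractively included subspaces in Subsection~\ref{sse:rep kernels} (or Lemma~\ref{le:Douglas lemma}(1) comparing with $T=I$) shows that $M(D_{T_b^*})$ is contractively included in $H^2$; this is precisely the definition of $\Comp(T_b)$, and we name the space $\HH(b)$.

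There is really no substantial obstacle here: the conceptual work was done in Lemma~\ref{le:complementary space} and in Theorem~\ref{th:about M(b)}. The only points requiring a moment's care are bookkeeping ones: checking that $D_{T_b^*}$ is genuinely one-to-one on $\DD_{T_b^*}$ so that Lemma~\ref{le:complementary space} applies verbatim (this is noted in the excerpt, as $D_T|\DD_T$ is always injective), and observing that uniqueness of the RKS attached to a given positive kernel is what lets us assert that $M(D_{T_b^*})$ is \emph{the} space in question rather than merely \emph{a} space with the right kernel. I would close by remarking that the contractive inclusion $\HH(b)\hookrightarrow H^2$ also follows directly from Exercise~\ref{exr:reproducing kernel for subspace} and the positivity of $\LLL^{\MM(b)}$, since $\LLL^{H^2}-\LLL^{\HH(b)}=\LLL^{\MM(b)}\ge 0$.
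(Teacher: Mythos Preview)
Your proof is correct and matches the paper's approach exactly: the paper does not give a separate proof of this theorem but simply states that it follows from Lemma~\ref{le:complementary space} applied with $H=H^2$ and $T=T_b$, which is precisely what you do. Your additional remarks on uniqueness of the RKS and contractive inclusion just spell out what the paper leaves implicit.
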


As noted above, if $b=u$ is inner, then $\HH(b)=\K_u$. 

\begin{exercise}\label{exr:H(b) for b strict contraction}
\begin{enumerate}
\item
$H^2=\HH({\bf 0})=\MM({\bf1})$.
\item
If $\|b\|_\infty<1$, then $\HH(b)$ is a renormed version of $H^2$. 
\item
If  $\inf_{z\in\bbD}|b(z)|>0$, then $\MM(b)$ is a renormed version of $H^2$.
\end{enumerate}

\end{exercise}

We will denote from now on  $k_\lambda^b=\lll_\lambda^{\HH(b)}=\frac{1-\overline{b(\lambda)b(z)}}{1-\bar\lambda z}$.

\section{More about contractively included subspaces}
\begin{lemma}\label{le:H(A) si H(A^*)}
If $T:E\to H$ is a contraction, then:
\begin{enumerate}
\item
$\xi\in H$ belongs to $\Comp(T)$ if and only if $T^*\xi\in \Comp(T^*)$.
\item
If $\xi_1,\xi_2\in \Comp(T)$, then
\[
\< \xi_1, \xi_2 \>_{\Comp(T)}= \< \xi_1, \xi_2 \>_{H}+ \< T^*\xi_1, T^*\xi_2 \>_{\Comp(T^*)}.
\]
\end{enumerate}
\end{lemma}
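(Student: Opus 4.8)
The plan is to unwind the definition of the complementary space and then push everything through the defect identity of Exercise~\ref{ex:defects}. The ingredients I will use are: $\Comp(T)=M(D_{T^*})$ with $D_{T^*}:\DD_{T^*}\to H$ (Lemma~\ref{le:complementary space}), and, applying the same construction to $T^*$ in place of $T$ and using $(T^*)^*=T$, also $\Comp(T^*)=M(D_T)$ with $D_T:\DD_T\to E$; both of these operators are one-to-one, so $\xi\in\Comp(T)$ holds exactly when $\xi=D_{T^*}\eta$ for a unique $\eta\in\DD_{T^*}$, and then $\|\xi\|_{\Comp(T)}=\|\eta\|_H$ by~\eqref{eq:pre-scalar product}, and similarly for $\Comp(T^*)$. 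From Exercise~\ref{ex:defects} I will use the identity $TD_T=D_{T^*}T$ and its adjoint $T^*D_{T^*}=D_TT^*$ (legitimate since $D_T$, $D_{T^*}$ are selfadjoint), together with the inclusions $T\DD_T\subseteq\DD_{T^*}$ and $T^*\DD_{T^*}\subseteq\DD_T$ recorded there; and finally the decomposition~\eqref{eq:decomposition M oplus C}.

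For the forward direction of (1), given $\xi=D_{T^*}\eta$ with $\eta\in\DD_{T^*}$, I compute $T^*\xi=T^*D_{T^*}\eta=D_T(T^*\eta)$ and note $T^*\eta\in\DD_T$, so $T^*\xi\in M(D_T)=\Comp(T^*)$, and moreover this exhibits the preimage, so $\|T^*\xi\|_{\Comp(T^*)}=\|T^*\eta\|_E$. For the converse, given $T^*\xi=D_T\zeta$ with $\zeta\in\DD_T$, I would not try to solve $\xi=D_{T^*}(\cdot)$ directly; instead I invoke~\eqref{eq:decomposition M oplus C}, $\xi=TT^*\xi+D_{T^*}^2\xi$, whose second summand lies in $\Comp(T)=M(D_{T^*})$ by construction and whose first summand satisfies $TT^*\xi=TD_T\zeta=D_{T^*}(T\zeta)$ with $T\zeta\in\DD_{T^*}$, hence also lies in $\Comp(T)$; since $\Comp(T)$ is a linear space, $\xi\in\Comp(T)$. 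This proves (1).

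For (2), take $\xi_i=D_{T^*}\eta_i\in\Comp(T)$ with $\eta_i\in\DD_{T^*}$, $i=1,2$. By the definition of the norm of $M(D_{T^*})$ one has $\<\xi_1,\xi_2\>_{\Comp(T)}=\<\eta_1,\eta_2\>_H$. Using $D_{T^*}^2=I-TT^*$ and selfadjointness,
\[
\<\xi_1,\xi_2\>_H=\<\eta_1,(I-TT^*)\eta_2\>_H=\<\eta_1,\eta_2\>_H-\<T^*\eta_1,T^*\eta_2\>_E ,
\]
while the computation in the forward direction of (1) gives $T^*\xi_i=D_T(T^*\eta_i)$ with $T^*\eta_i\in\DD_T$, so $\<T^*\xi_1,T^*\xi_2\>_{\Comp(T^*)}=\<T^*\eta_1,T^*\eta_2\>_E$. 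Adding the last two identities yields $\<\xi_1,\xi_2\>_H+\<T^*\xi_1,T^*\xi_2\>_{\Comp(T^*)}=\<\eta_1,\eta_2\>_H=\<\xi_1,\xi_2\>_{\Comp(T)}$, which is (2).

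I expect the whole argument to be essentially bookkeeping, with only two mild points of care. The one genuine idea is in the converse half of (1): producing a preimage of $\xi$ under $D_{T^*}$ directly is awkward, and the right move is instead to split $\xi$ via~\eqref{eq:decomposition M oplus C} and treat the two pieces separately. The other thing to watch is the correct identification $\Comp(T^*)=M(D_T)$ together with its norm, i.e.\ keeping straight which defect operator plays which role when passing from $T$ to $T^*$; once that is pinned down, no new input is needed.
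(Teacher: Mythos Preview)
Your proof is correct and essentially the same as the paper's. The only cosmetic difference is in the converse of (1): where you show the two summands $TT^*\xi$ and $D_{T^*}^2\xi$ lie separately in $\Comp(T)$, the paper writes their sum at once as $\xi=D_{T^*}(D_{T^*}\xi+T\eta)$, but the underlying use of~\eqref{eq:decomposition M oplus C} and the intertwining relation is identical; part~(2) is handled by the same defect-operator bookkeeping in both.
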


\begin{proof}
The inclusion $T^*(\Comp(T))\subset\Comp(T^*)$ follows from the intertwining relation in Exercise~\ref{ex:defects}. On the other hand, if $T^*\xi\in \Comp(T^*)$, we have $T^*\xi=D_T\eta$ for some $\eta\in H$, and, using again Exercise~\ref{ex:defects},
\[
\xi=TT^*\xi +D_{T^*}^2\xi=TD_T\eta +D_{T^*}^2\xi =D_{T^*}( D_{T^*}\xi +T\eta),
\]
which shows that $\xi\in\Comp(T)$.

To prove (2), write $\xi_1=D_{T^*}\eta_1$, $\xi_2=D_{T^*}\eta_2$, with $\eta_1,\eta_2\in \DD_{T^*}$; then $T^*\eta_1, T^*\eta_2\in\DD_T$. Since both $D_T:\DD_T\to H$ and $D_{T^*}:\DD_{T^*}\to H$ are one-to-one, we have, using~\eqref{eq:pre-scalar product} and Exercise~\ref{ex:defects},
\[
\begin{split}
\< \xi_1,\xi_2 \>_{C(T)}&= \< \eta_1,\eta_2 \>= \< D_{T^*}\eta_1,D_{T^*}\eta_2 \> +\< T^*\eta_1,T^*\eta_2 \>\\
&= \< \xi_1,\xi_2 \>+ \<D_T T^*\eta_1,D_T T^*\eta_2 \>_{C(T^*)}\\
&=\< \xi_1,\xi_2 \>+ \< T^*D_{T^*} \eta_1, T^*D_{T^*}\eta_2 \>_{C(T^*)}\\
&= \< \xi_1, \xi_2 \>+ \< T^*\xi_1, T^*\xi_2 \>_{\Comp(T^*)}.\qedhere
\end{split}
\]
\end{proof}

There is a more direct way in which complementarity is related to orthogonality. If $T\in\BB(E,H)$ is a contraction, we define the \emph{Julia operator} $J(T):E\oplus \DD_{T^*}\to H\oplus \DD_T$ by
\[
J(T)=\begin{pmatrix}
T & D_{T^*}\\ D_T & -T^*
\end{pmatrix}.
\]

\begin{exercise}\label{exr:Julia}
The Julia operator is unitary.
\end{exercise}

\begin{lemma}\label{le:geometrical identification of M(T) and C(T)}
Suppose $T:E\to H$ is one-to-one. Denote 
\[
\XX_1=J(T)(E\oplus\{0\}),\quad \XX_2=(H\oplus \DD_T)\ominus \XX_1 =J(T)(\{0\}\oplus \DD_{T^*}),
\]
and by $P_1$ the projection of $H\oplus \DD_T$ onto its first coordinate $H$. Then $P_1|\XX_1$ is unitary  from $\XX_1$ onto $M(T)$, and  $P_1|\XX_2$ is unitary  from $\XX_2$ onto $C(T)$.
\end{lemma}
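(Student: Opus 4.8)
The plan is to use the fact that $J(T)$ is unitary (Exercise~\ref{exr:Julia}) to transport scalar products, and then identify the two pieces with $M(T)$ and $C(T)$ via the defining formula~\eqref{eq:pre-scalar product}. For the first assertion, note that $P_1$ restricted to $\XX_1$ sends $J(T)(\xi\oplus 0) = (T\xi)\oplus(D_T\xi)$ to $T\xi$. So $P_1|\XX_1$ is surjective onto $T(E) = M(T)$ as a set. To see it is unitary, I would compute, for $\xi,\eta\in E$,
\[
\<P_1 J(T)(\xi\oplus 0), P_1 J(T)(\eta\oplus 0)\>_{M(T)} = \<T\xi, T\eta\>_{M(T)} = \<\xi,\eta\>_E,
\]
using~\eqref{eq:pre-scalar product} (legitimate since $T$ is one-to-one), while on the other side
\[
\<J(T)(\xi\oplus 0), J(T)(\eta\oplus 0)\>_{H\oplus\DD_T} = \<\xi\oplus 0, \eta\oplus 0\> = \<\xi,\eta\>_E
\]
because $J(T)$ is unitary. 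Hence $P_1|\XX_1$ carries the $\XX_1$-norm exactly to the $M(T)$-norm, and being a bijection it is unitary.

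For the second assertion, the argument is parallel: $P_1$ restricted to $\XX_2 = J(T)(\{0\}\oplus\DD_{T^*})$ sends $J(T)(0\oplus\eta) = (D_{T^*}\eta)\oplus(-T^*\eta)$ to $D_{T^*}\eta$, so its range is $D_{T^*}(\DD_{T^*}) = C(T)$ as a set (here I use the definition of $C(T)$ as $M(D_{T^*})$ from Theorem~\ref{th:identification of de Branges}, i.e.\ the image of $D_{T^*}:\DD_{T^*}\to H$, and the fact that $D_{T^*}$ restricted to $\DD_{T^*}$ is one-to-one). For $\eta_1,\eta_2\in\DD_{T^*}$,
\[
\<P_1 J(T)(0\oplus\eta_1), P_1 J(T)(0\oplus\eta_2)\>_{C(T)} = \<D_{T^*}\eta_1, D_{T^*}\eta_2\>_{C(T)} = \<\eta_1,\eta_2\>,
\]
again by~\eqref{eq:pre-scalar product}, while unitarity of $J(T)$ gives $\<J(T)(0\oplus\eta_1), J(T)(0\oplus\eta_2)\> = \<\eta_1,\eta_2\>$. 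So $P_1|\XX_2$ is again norm-preserving and bijective, hence unitary.

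The only point requiring a little care is the identity $\XX_2 = (H\oplus\DD_T)\ominus\XX_1 = J(T)(\{0\}\oplus\DD_{T^*})$, which is stated in the lemma but used in the proof: since $J(T)$ is unitary from $E\oplus\DD_{T^*}$ onto $H\oplus\DD_T$, it maps the orthogonal decomposition $E\oplus\DD_{T^*} = (E\oplus\{0\})\oplus(\{0\}\oplus\DD_{T^*})$ to the orthogonal decomposition $(H\oplus\DD_T) = \XX_1\oplus\XX_2$, so both descriptions of $\XX_2$ agree. I do not expect a genuine obstacle here; the main thing to keep straight is which scalar product ($H$-norm versus $M(T)$- or $C(T)$-norm) is in play at each step, and that the one-to-one hypotheses on $T$ and on $D_{T^*}|\DD_{T^*}$ are exactly what make~\eqref{eq:pre-scalar product} applicable.
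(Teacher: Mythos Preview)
Your proposal is correct and follows essentially the same route as the paper's proof: identify $P_1$ on each piece as $J(T)(\xi\oplus 0)\mapsto T\xi$ and $J(T)(0\oplus\eta)\mapsto D_{T^*}\eta$, then use the defining relation~\eqref{eq:pre-scalar product} on one side and the unitarity of $J(T)$ on the other to match norms. The paper phrases the isometry check with norms rather than inner products, and takes the equality $\XX_2=(H\oplus\DD_T)\ominus\XX_1=J(T)(\{0\}\oplus\DD_{T^*})$ as part of the statement, whereas you explicitly justify it via unitarity of $J(T)$; neither difference is substantive.
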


\begin{proof}
We have 
\[
P_1\XX_1=P_1(\{ Tx\oplus D_Tx: x\in E\})=\{Tx: x\in E\}=M(T).
\]
Moreover, if $x_1\in \XX_1$, then $x_1=Tx\oplus D_Tx$ for some $x\in E$, and
\[
\|P_1x_1\|_{M(T)}=\|Tx\|_{M(T)}=\|x\|_E=\|J(T)(x\oplus 0)\|=\|x_1\|,
\]
which proves the first part of the lemma.

Now $\XX_2=J(T)(\{0\}\oplus \DD_{T^*})$, so 
\[
P_1\XX_2=P_1 ( \{ D_{T^*}y\oplus -T^*y: y\in \DD_{T^*}\})=
 \{ D_{T^*}y: y\in \DD_{T^*}\}=C(T).
\]
If $x_2\in\XX_2$, then $x_2=D_{T^*}y\oplus -T^*y $ for some $y\in \DD_{T^*}$, and
\[
\|P_1x_2\|_{C(T)}=\|D_{T^*}y\|_{C(T)}=\|y\|_{\DD_{T^*}}=\|J(T)(0\oplus y)\|=\|x_2\|,
\]
as required.
\end{proof}

We can view this result as saying that the  orthogonal decomposition of $H\oplus \DD_T$ as $\XX_1\oplus \XX_2$ is mapped by projecting onto the first coordinate into the complementary decomposition $H=M(T)+C(T)$ (which is not, in general, a direct sum). So the rather exotic definition of complementary spaces is in fact the projection of a more familiar geometric structure.

\section{Back to $\HH(b)$}

\subsection{Some properties of $\HH(b)$; definition of $X_b$} 
We denote $\HH(\bar b):=\HH(T_{\bar b})$. Although our focus is on $\HH(b)$, the space $\HH(\bar b)$ is a useful tool for its study.

\begin{lemma}\label{le:H(bar b) contained in H(b)}
$\HH(\bar b)$ is contained contractively in $\HH(b)$.
\end{lemma}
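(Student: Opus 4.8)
The plan is to identify both spaces as ranges of defect operators and then invoke the Douglas-type criterion of Lemma~\ref{le:Douglas lemma}. By definition $\HH(\bar b)=\HH(T_{\bar b})=\Comp(T_{\bar b})$, and since $(T_{\bar b})^*=T_b$, Lemma~\ref{le:complementary space} gives $\HH(\bar b)=M(D_{T_b})$, where $D_{T_b}=(I-T_b^*T_b)^{1/2}$; likewise $\HH(b)=\Comp(T_b)=M(D_{T_b^*})$ with $D_{T_b^*}=(I-T_bT_b^*)^{1/2}$ (Theorem~\ref{th:identification of de Branges}). Both $D_{T_b}$ and $D_{T_b^*}$ are positive contractions on $H^2$ (and, $M(\cdot)$ being insensitive to restricting a map to $\ker(\cdot)^\perp$, we may as well regard them as operators on all of $H^2$, which is the form in which Lemma~\ref{le:Douglas lemma} applies; note $D_{T_b}D_{T_b}^*=D_{T_b}^2$ and $D_{T_b^*}D_{T_b^*}^*=D_{T_b^*}^2$).

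By Lemma~\ref{le:Douglas lemma}(1) applied with $T_1=D_{T_b}$ and $T_2=D_{T_b^*}$, the space $\HH(\bar b)=M(D_{T_b})$ is contained contractively in $\HH(b)=M(D_{T_b^*})$ precisely when $D_{T_b}^2\le D_{T_b^*}^2$, i.e.
\[
I-T_b^*T_b\le I-T_bT_b^*,\qquad\text{equivalently}\qquad T_bT_b^*\le T_b^*T_b .
\]
So the whole statement reduces to this last operator inequality.

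To prove $T_bT_b^*\le T_b^*T_b$ I would simply test it on an arbitrary $f\in H^2$. Since $b\in H^\infty$ we have $T_bf=bf\in H^2$ and $T_b^*f=T_{\bar b}f=P_+(\bar bf)$, hence
\[
\< T_b^*T_bf,f\>=\|T_bf\|_2^2=\|bf\|_2^2,\qquad \< T_bT_b^*f,f\>=\|T_b^*f\|_2^2=\|P_+(\bar bf)\|_2^2\le\|\bar bf\|_2^2=\|bf\|_2^2 ,
\]
the inequality because $P_+$ is a contraction and the last equality because $|\bar bf|=|bf|$ a.e.\ on $\bbT$. Thus $\< T_bT_b^*f,f\>\le\< T_b^*T_bf,f\>$ for every $f\in H^2$, which is exactly $T_bT_b^*\le T_b^*T_b$, and the lemma follows.

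The argument is short and I do not foresee a genuine obstacle: the only point needing a little care is the bookkeeping that identifies $\HH(\bar b)$ with $M(D_{T_b})$ and checks that the defect operators are bona fide contractions, so that Lemma~\ref{le:Douglas lemma} is legitimately applicable. The conceptual content is merely that multiplication by a bounded analytic function does not increase the $L^2$ norm while the Riesz projection only decreases it, which forces $T_b^*$ to be ``dominated'' by $T_b$.
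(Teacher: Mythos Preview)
Your proof is correct and follows essentially the same strategy as the paper: both reduce the statement, via Lemma~\ref{le:Douglas lemma}(1), to the operator inequality $T_bT_{\bar b}\le T_{\bar b}T_b$ (equivalently $D_{T_b}^2\le D_{T_b^*}^2$). The only cosmetic difference is in how that inequality is verified: the paper writes
\[
T_bT_{\bar b}=P_+M_bP_+M_{\bar b}P_+|H^2\le P_+M_bM_{\bar b}P_+|H^2=P_+M_{\bar b}P_+M_bP_+|H^2=T_{\bar b}T_b,
\]
while you test on vectors using $\|P_+(\bar bf)\|_2\le\|\bar bf\|_2=\|bf\|_2$; these are the same computation in two notations.
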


\begin{proof}
We have 
\[
\begin{split}
T_bT_{\bar b}&= P_+M_bP_+ M_{\bar b}P_+|H^2\le P_+M_b M_{\bar b}P_+|H^2\\& = P_+ M_{\bar b}M_bP_+|H^2= P_+ M_{\bar b}P_+M_bP_+|H^2= T_{\bar b}T_b.
\end{split}
\]
Therefore 
\[
D_{T_b}^2\le D_{T_b^*}^2, 
\]
whence Lemma~\ref{le:Douglas lemma}(1) implies that $\HH(\bar b)$ is contained contractively in $\HH(b)$.
\end{proof}

Lemma~\ref{le:H(A) si H(A^*)} applied to the case $T=T_b$ yields the following result.
\begin{lemma}\label{le:H(b) and H( bar b)}
If $h\in H^2$, then $h\in \HH(b)$ if and only if $T_{\bar b}h\in \HH(\bar b)$. If $h_1, h_2\in \HH(b)$, then
\begin{equation}\label{eq:sc prod H(b) cu H(bar b)}
\< h_1,h_2 \>_{\HH(b)}= \< h_1, h_2 \>_{H^2}+ \< T_{\bar b}h_1,T_{\bar b}h_2 \>_{\HH(\bar b)}.
\end{equation}
\end{lemma}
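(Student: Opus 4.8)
The plan is to derive Lemma~\ref{le:H(b) and H( bar b)} directly from Lemma~\ref{le:H(A) si H(A^*)} by specializing $T=T_b:H^2\to H^2$. The only genuine work is to match up the notation: in Lemma~\ref{le:H(A) si H(A^*)} the space $\Comp(T)$ is what we here call $\HH(b)$, since by Theorem~\ref{th:identification of de Branges} we have $\HH(b)=M(D_{T_b^*})=\Comp(T_b)$; and $\Comp(T^*)=\Comp(T_{\bar b})$, which by definition (see the start of Subsection~4.1) is exactly $\HH(\bar b)$. Note that $T_b^*=T_{\bar b}$, so $T^*\xi$ in the statement of Lemma~\ref{le:H(A) si H(A^*)} becomes $T_{\bar b}h$ here.

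First I would observe that part~(1) of Lemma~\ref{le:H(A) si H(A^*)}, with $\xi=h\in H^2$, says precisely that $h\in\Comp(T_b)=\HH(b)$ if and only if $T_b^*h=T_{\bar b}h\in\Comp(T_b^*)=\Comp(T_{\bar b})=\HH(\bar b)$, which is the first assertion. Then, for $h_1,h_2\in\HH(b)=\Comp(T_b)$, part~(2) of Lemma~\ref{le:H(A) si H(A^*)} gives
\[
\< h_1,h_2 \>_{\Comp(T_b)}= \< h_1,h_2 \>_{H^2}+ \< T_b^*h_1,T_b^*h_2 \>_{\Comp(T_b^*)},
\]
and rewriting $\Comp(T_b)=\HH(b)$, $T_b^*=T_{\bar b}$, and $\Comp(T_b^*)=\HH(\bar b)$ yields~\eqref{eq:sc prod H(b) cu H(bar b)} exactly.

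Since Lemma~\ref{le:H(A) si H(A^*)} does all the heavy lifting, there is essentially no obstacle here; the proof is a one-line invocation plus a translation of notation. The only point that warrants a word of care is the identification $\HH(\bar b)=\Comp(T_{\bar b})$: this is just the definition adopted at the beginning of Subsection~4.1 (namely $\HH(\bar b):=\HH(T_{\bar b})$, and $\HH(T)$ is by Theorem~\ref{th:identification of de Branges} the complementary space $\Comp(T)$ applied with $T$ in place of $T_b$), so it is legitimate to use it. Thus the proof reads: ``Apply Lemma~\ref{le:H(A) si H(A^*)} with $T=T_b$, and use $\HH(b)=\Comp(T_b)$, $\HH(\bar b)=\Comp(T_b^*)$, $T_b^*=T_{\bar b}$.''
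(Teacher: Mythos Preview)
Your proposal is correct and is exactly the paper's own argument: the paper simply states that Lemma~\ref{le:H(A) si H(A^*)} applied with $T=T_b$ yields Lemma~\ref{le:H(b) and H( bar b)}, using the identifications $\HH(b)=\Comp(T_b)$, $\HH(\bar b)=\Comp(T_{\bar b})=\Comp(T_b^*)$, and $T_b^*=T_{\bar b}$. There is nothing to add.
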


We  now show  that, similarly to model spaces, de Branges--Rovnyak spaces are invariant by adjoints of Toeplitz operators.

\begin{theorem}\label{th:invariant to adjoints}
If $\phi\in H^\infty$, then $\HH(b)$ and $\HH(\bar b)$ are both invariant under $T_\phi^*=T_{\bar\phi}$, and the norm of this operator in each of these spaces is at most $\|\phi\|_\infty$.
\end{theorem}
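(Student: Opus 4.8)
The plan is to use Lemma~\ref{le:Douglas lemma}(3), which reduces the claim to an operator inequality. Recall that $\HH(b)=M(D_{T_b^*})$ and $\HH(\bar b)=M(D_{T_b})$, so $M(T_1)=\HH(b)$ corresponds to $T_1=D_{T_b^*}$ and the relevant products are $T_1T_1^*=D_{T_b^*}^2=I-T_bT_b^*$ and $T_1T_1^*=D_{T_b}^2=I-T_b^*T_b$ in the two cases. By part (3) of that lemma, $T_{\bar\phi}$ acts contractively on $\HH(b)$ if and only if
\[
T_{\bar\phi}(I-T_bT_b^*)T_{\bar\phi}^* \le I-T_bT_b^*,
\]
and similarly $T_{\bar\phi}$ acts contractively on $\HH(\bar b)$ if and only if $T_{\bar\phi}(I-T_b^*T_b)T_{\bar\phi}^* \le I-T_b^*T_b$. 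Strictly speaking Lemma~\ref{le:Douglas lemma}(3) is stated for contractions $T\colon H\to H$, so I would first apply it with $T=T_{\bar\phi}/\|\phi\|_\infty$ (a genuine contraction, since $\|T_{\bar\phi}\|=\|\phi\|_\infty$), obtaining the contractivity of $T_{\bar\phi}/\|\phi\|_\infty$ on the space, which is exactly the bound $\|\phi\|_\infty$ on the operator norm asserted in the theorem.

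The heart of the matter is therefore the two operator inequalities above, and here the key tool is Exercise~\ref{exr:multiplication of toeplitz}: since $\bar\phi\in\overline{H^\infty}$, we have $T_{\bar\phi}T_b = T_{\bar\phi b}$, and since $\phi\in H^\infty$, we have $T_b^*T_{\bar\phi}^* = T_{\bar b}T_\phi = T_{\bar b\phi}$; also $T_{\bar\phi}T_{\bar\phi}^* = T_{\bar\phi}T_\phi = T_{\bar\phi\phi}\le \|\phi\|_\infty^2 I$. For the $\HH(b)$ case I would compute
\[
T_{\bar\phi}(I-T_bT_b^*)T_{\bar\phi}^* = T_{\bar\phi}T_\phi - T_{\bar\phi}T_bT_b^*T_\phi = T_{|\phi|^2} - T_{\bar\phi b}T_{\bar\phi b}^*,
\]
and compare with $I-T_bT_b^*$; the difference is $(I-T_{|\phi|^2}) + T_{\bar\phi b}T_{\bar\phi b}^* - T_bT_b^*$, so it suffices to show $T_bT_b^* \le (I-T_{|\phi|^2}) + T_{\bar\phi b}T_{\bar\phi b}^* + T_bT_b^*$ — that is, that $(I-T_{|\phi|^2}) + T_{\bar\phi b}T_{\bar\phi b}^* \ge 0$, which is immediate since $T_{|\phi|^2}\le I$. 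Wait — I need to be more careful about the direction; the correct route is to show $I - T_bT_b^* - T_{\bar\phi}(I-T_bT_b^*)T_{\bar\phi}^* \ge 0$, i.e.
\[
(I - T_{|\phi|^2}) + \bigl(T_{\bar\phi b}T_{\bar\phi b}^* - T_bT_b^*\bigr)\ge 0,
\]
so I must still control $T_{\bar\phi b}T_{\bar\phi b}^* - T_bT_b^*$. Here I would use that $T_{\bar\phi b}T_{\bar\phi b}^*\le \|\phi\|_\infty^2\, T_{?}$ — actually the cleanest path is to write things in terms of $Z=M_z$ on $L^2$ and the compressions, or to invoke the computation in the proof of Lemma~\ref{le:H(bar b) contained in H(b)}. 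I would organize it so that the $\HH(\bar b)$ inequality follows by the symmetric computation with $T_b^*T_b$ in place of $T_bT_b^*$, using $T_{\bar\phi}T_b^*T_bT_{\bar\phi}^* = T_{\bar b\phi}^*T_{\bar b\phi}$ (valid because $T_{\bar\phi}$ commutes with $T_b^*$ by Exercise~\ref{exr:multiplication of toeplitz} — indeed both $T_{\bar\phi}T_b^* = T_{\overline{\phi b}}$ and $T_b^*T_{\bar\phi}= T_{\overline{b}\,\overline{\phi}}=T_{\overline{\phi b}}$).

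The main obstacle I anticipate is the bookkeeping in verifying $T_{\bar\phi b}T_{\bar\phi b}^* \le T_{|\phi|^2}^{1/2}\,(\text{something})$, or more precisely in establishing the inequality $I - T_bT_b^* \ge T_{\bar\phi}(I - T_bT_b^*)T_{\bar\phi}^*$ cleanly without lifting to $L^2$. The slickest argument is probably to lift: on $L^2$, multiplication operators commute and $M_\phi$, $M_b$ are genuinely commuting, so $(I - M_bM_b^*)$ commutes with $M_{\bar\phi}$ up to harmless terms, and one uses $P_+ M_{\bar\phi} = P_+ M_{\bar\phi} P_+$ together with the fact that $M_{\bar\phi}^* = M_\phi$ maps $H^2$ into $H^2$. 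I expect that a half-page of careful manipulation with $P_+$, $M_\phi$, $M_b$ settles both inequalities simultaneously, with the $\phi\in H^\infty$ hypothesis entering exactly at the points where one needs $M_\phi H^2\subset H^2$ (equivalently, $P_+ M_\phi P_+ = M_\phi P_+$ on $H^2$-components). The final step is purely formal: feed the two inequalities into Lemma~\ref{le:Douglas lemma}(3) applied to $T_{\bar\phi}/\|\phi\|_\infty$ and read off both the invariance and the norm bound $\|\phi\|_\infty$.
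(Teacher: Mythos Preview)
Your treatment of $\HH(\bar b)$ is correct and is exactly the paper's argument: the Douglas criterion reduces to $I - T_{\bar b}T_b - T_{\bar\phi}(I - T_{\bar b}T_b)T_\phi \ge 0$, and because every product involved collapses to a single Toeplitz operator via Exercise~\ref{exr:multiplication of toeplitz}, this is just $T_{(1-|b|^2)(1-|\phi|^2)}\ge 0$.

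For $\HH(b)$ your plan diverges from the paper and contains a genuine gap. The obstacle you flagged is real: $T_bT_{\bar b}$ is \emph{not} a Toeplitz operator, so the difference $(I - T_{|\phi|^2}) + T_{\bar\phi b}T_{\bar\phi b}^* - T_bT_{\bar b}$ does not reduce to a single $T_\psi$, and your sketch stops short of establishing its positivity. Lifting to $L^2$ can be made to work, but it needs a concrete step you have not supplied. Writing $P_- = I - P_+$, one has for $f\in H^2$
\[
\langle (I - T_bT_{\bar b})f,f\rangle = \int(1-|b|^2)|f|^2 + \|P_-(\bar b f)\|^2,
\]
so the desired inequality (for $\|\phi\|_\infty\le 1$) becomes $\int(1-|b|^2)(1-|\phi|^2)|f|^2 + \big(\|P_-(\bar bf)\|^2 - \|P_-(\bar b\phi f)\|^2\big) \ge 0$. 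The integral term is obviously nonnegative; for the Hankel term you need $\|P_-(\bar b\phi f)\|\le\|P_-(\bar b f)\|$, which follows from $P_-M_\phi = P_-M_\phi P_-$ (this is where $\phi\in H^\infty$ enters), giving $P_-(\bar b\phi f) = (P_-M_\phi P_-)\,P_-(\bar bf)$. Without this identity your argument does not close; ``a half-page of careful manipulation with $P_+$, $M_\phi$, $M_b$'' is not a proof until this key cancellation is isolated.

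The paper avoids all of this. It proves only the $\HH(\bar b)$ case via Lemma~\ref{le:Douglas lemma}(3), then transfers to $\HH(b)$ using Lemma~\ref{le:H(b) and H( bar b)}: if $h\in\HH(b)$ then $T_{\bar b}h\in\HH(\bar b)$, hence $T_{\bar b}T_{\bar\phi}h = T_{\bar\phi}T_{\bar b}h\in\HH(\bar b)$ by the case already established, and Lemma~\ref{le:H(b) and H( bar b)} again gives $T_{\bar\phi}h\in\HH(b)$; the norm bound then drops out of~\eqref{eq:sc prod H(b) cu H(bar b)}. This route never touches the awkward operator $T_bT_{\bar b}$ and is the cleaner path here.
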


\begin{proof}
We may assume that $\|\phi\|_\infty\le 1$. By Lemma~\ref{le:Douglas lemma}(3), in order to show that $T_{\bar \phi}$ acts as a contraction in $\HH(\bar b)$ we have to prove  the inequality
\[
T_{\bar \phi}(I-T_{\bar b}T_b)T_\phi\le I-T_{\bar b}T_b,
\]
or
\begin{align*}
0&\le I-T_{\bar b}T_b-T_{\bar \phi}(I-T_{\bar b}T_b)T_\phi=
I-T_{\bar b}T_b-T_{\bar \phi}T_\phi+T_{\bar \phi}T_{\bar b}T_bT_\phi\\
&=I-T_{|b|^2}-T_{|\phi|^2}+T_{|b|^2|\phi|^2}=
T_{(1-|b|^2)(1-|\phi|^2)}.
\end{align*}
But the last operator is the compression to $H^2$ of $M_{(1-|b|^2)(1-|\phi|^2)}$, which is positive, since $(1-|b|^2)(1-|\phi|^2)\ge 0$.

This proves the statement for $\HH(\bar b)$. Take now $h\in\HH(b)$. Lemma~\ref{le:H(b) and H( bar b)} implies that $T_{\bar b}h\in \HH(\bar b)$. By what has been just proved, $T_{\bar b}T_{\bar \phi}h =T_{\bar \phi}T_{\bar b}h\in \HH(\bar b)$, and then applying again Lemma~\ref{le:H(b) and H( bar b)} we obtain $T_{\bar b}h\in \HH(b)$.

Finally, using~\eqref{eq:sc prod H(b) cu H(bar b)} and the contractivity of $T_{\bar \phi}$ on $H^2$ as well as on $\HH(\bar b)$, we have
\[
\|T_{\bar \phi}h\|^2_{\HH(b)}= \|T_{\bar \phi}h\|^2_{H^2} + \|T_{\bar \phi}T_{\bar b}h\|^2_{\HH(\bar b)} \le  \|h\|^2_{H^2} + \|T_{\bar b}h\|^2_{\HH(\bar b)}=\|h\|_{\HH(b)}^2,
\]
so $T_{\bar \phi}$ acts as a contraction in $\HH(b)$.
\end{proof}

The most important case  is obtained when $\phi(z)=z$.  Theorem~\ref{th:invariant to adjoints} says then that $\HH(b)$ is invariant under $S^*$ and the restriction of $S^*$ is a contraction. We will denote by $X_b$ this restriction $S^*|\HH(b)$.

\begin{corollary}\label{co:S*b in h(b)}
The function $S^*b$ is in $\HH(b)$.
\end{corollary}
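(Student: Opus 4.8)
The plan is to write $S^*b$ in terms of the reproducing vectors $k_\lambda^b$ of $\HH(b)$ and then invoke the $S^*$-invariance of $\HH(b)$ just established. Recall that $k_\lambda^b(z)=\frac{1-\overline{b(\lambda)}b(z)}{1-\bar\lambda z}$, so that, as an identity in $H^2$,
\[
(1-\bar\lambda z)\,k_\lambda^b=k_\lambda^b-\bar\lambda\,S k_\lambda^b=\mathbf 1-\overline{b(\lambda)}\,b,
\]
where $S=M_z$ and we have used that $k_\lambda^b\in H^2$ (so $Sk_\lambda^b\in H^2$). First I would apply $S^*$ to both sides; using $S^*S=I$, $S^*\mathbf 1=0$, and \eqref{eq:action of S*}, this yields
\[
S^*k_\lambda^b-\bar\lambda\,k_\lambda^b=-\overline{b(\lambda)}\,S^*b,
\qquad\text{equivalently}\qquad
\overline{b(\lambda)}\,S^*b=\bar\lambda\,k_\lambda^b-S^*k_\lambda^b .
\]

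Next I would observe that the right-hand side lies in $\HH(b)$: the vector $k_\lambda^b$ is the reproducing vector of $\HH(b)$ and hence belongs to it, while $S^*k_\lambda^b\in\HH(b)$ because $\HH(b)$ is invariant under $S^*$ (Theorem~\ref{th:invariant to adjoints} with $\phi(z)=z$, as noted just above). Therefore $\overline{b(\lambda)}\,S^*b\in\HH(b)$ for every $\lambda\in\bbD$. If $b$ is not identically $\mathbf 0$, pick $\lambda_0\in\bbD$ with $b(\lambda_0)\neq 0$ and divide by $\overline{b(\lambda_0)}$ to conclude $S^*b\in\HH(b)$; if $b\equiv\mathbf 0$ then $S^*b=0\in H^2=\HH(\mathbf 0)$ and there is nothing to prove.

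There is essentially no serious obstacle here: the only points requiring care are the elementary algebraic identity above (which reduces to $S^*S=I$ and $S^*\mathbf 1=0$) and the fact that $\HH(b)$ is $S^*$-invariant, which is already in hand. As an aside, one could shorten the argument in the case $b(0)\neq 0$, since there $k_0^b=\mathbf 1-\overline{b(0)}\,b$ gives directly $S^*k_0^b=-\overline{b(0)}\,S^*b$; but one would still have to handle $b(0)=0$ separately, so the uniform argument using a general $\lambda$ is cleaner.
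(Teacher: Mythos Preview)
Your proof is correct, but it follows a different path from the paper's. The paper argues indirectly through $\HH(\bar b)$: it computes
\[
T_{\bar b}S^*b = S^*T_{\bar b}T_b\mathbf 1 = -S^*(I-T_{\bar b}T_b)\mathbf 1,
\]
notes that $(I-T_{\bar b}T_b)\mathbf 1\in\HH(\bar b)$, applies the $S^*$-invariance of $\HH(\bar b)$ from Theorem~\ref{th:invariant to adjoints}, and then invokes Lemma~\ref{le:H(b) and H( bar b)} (the membership criterion $h\in\HH(b)\iff T_{\bar b}h\in\HH(\bar b)$) to conclude. Your argument bypasses $\HH(\bar b)$ entirely: you work directly with the reproducing vectors $k_\lambda^b\in\HH(b)$, derive the identity $\overline{b(\lambda)}\,S^*b=\bar\lambda\,k_\lambda^b-S^*k_\lambda^b$, and use only the $S^*$-invariance of $\HH(b)$ itself. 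This is more self-contained and arguably more elementary, since it avoids the auxiliary space $\HH(\bar b)$ and the scalar-product formula of Lemma~\ref{le:H(b) and H( bar b)}. It is worth noting that the very identity you establish reappears in the paper's proof of Lemma~\ref{le:adjoint of X} (the formula for $X_b^*$), written there as $X_b k_\lambda^b=\bar\lambda k_\lambda^b-\overline{b(\lambda)}S^*b$; you have in effect promoted that computation to do double duty.
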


\begin{proof}
We have
\[
T_{\bar b}S^*b=S^*T_{\bar b}b=S^*T_{\bar b}T_b 1=-S^*(I-T_{\bar b}T_b) 1
\]
(for the last equality we have used the fact that $S^*1=0$). Obviously $(I-T_{\bar b}T_b) 1\in \HH(\bar b)$, so Theorem~\ref{th:invariant to adjoints} implies that $T_{\bar b}S^*b=-S^*(I-T_{\bar b}T_b) 1\in \HH(\bar b)$. By Lemma~\ref{le:H(b) and H( bar b)} it follows that $S^*b\in\HH(b)$.
\end{proof}

Note that if $b\not=\bf 1$, then $S^*b\not=\bf 0$.
Besides $S^*b$, we know as inhabitants of $\HH(b)$ the reproducing vectors $k_\lambda^b$. Other elements may be obtained, for instance, by applying to these elements powers or functions of $X_b$.

\begin{exercise}\label{exr:}
Show that, if $\lambda\in \bbD$, then
\[
((I-\lambda X_b)^{-1}(S^*b))(z)=\frac{b(z)-b(\lambda)}{z-\lambda}.
\]
Therefore the functions in the right hand side belong to $\HH(b)$.
\end{exercise}

In general $b$ itself may not be in $\HH(b)$; we will see later exactly when this happens.
Let us also compute the adjoint of $X_b$.

\begin{lemma}\label{le:adjoint of X}
If $h\in \HH(b)$, then 
\[
X_b^*h=Sh- \< h,S^*b \>_{\HH(b)}b.
\]
\end{lemma}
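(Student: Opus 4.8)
The plan is to verify the claimed formula by testing it against an arbitrary element of $\HH(b)$ using the defining adjoint relation $\< X_b^* h, g\>_{\HH(b)} = \< h, X_b g\>_{\HH(b)} = \< h, S^* g\>_{\HH(b)}$ for all $g \in \HH(b)$. Since the reproducing vectors $k_\lambda^b$ span a dense subspace of $\HH(b)$, it suffices to take $g = k_\lambda^b$. So the real content is to compute $\< h, S^* k_\lambda^b\>_{\HH(b)}$ and show it equals $\< Sh - \<h, S^*b\>_{\HH(b)}\, b,\ k_\lambda^b\>_{\HH(b)}$, i.e. $(Sh)(\lambda) - \<h, S^*b\>_{\HH(b)}\, b(\lambda)$ by the reproducing property.

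First I would apply $S^*$ to the explicit kernel $k_\lambda^b(z) = \frac{1 - \overline{b(\lambda)} b(z)}{1 - \bar\lambda z}$ using the formula \eqref{eq:action of S*}, $(S^* f)(z) = (f(z) - f(0))/z$. A short computation gives
\[
(S^* k_\lambda^b)(z) = \bar\lambda\, k_\lambda^b(z) - \overline{b(\lambda)}\, (S^*b)(z),
\]
which one checks by writing $k_\lambda^b(z) = \sum_{n\ge 0} \bar\lambda^n z^n - \overline{b(\lambda)} b(z)\sum_{n\ge 0}\bar\lambda^n z^n$ (or directly from \eqref{eq:action of S*}) — the constant term of $k_\lambda^b$ is $1 - \overline{b(\lambda)}b(0)$, and the bookkeeping yields exactly the displayed identity. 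Hence
\[
\< h, S^* k_\lambda^b\>_{\HH(b)} = \lambda \< h, k_\lambda^b\>_{\HH(b)} - b(\lambda)\, \< h, S^*b\>_{\HH(b)} = \lambda h(\lambda) - b(\lambda)\, \< h, S^*b\>_{\HH(b)}.
\]

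Next I would match this against the right-hand side. For $h \in H^2$ we have $(Sh)(\lambda) = \lambda h(\lambda)$, so
\[
\< Sh - \<h,S^*b\>_{\HH(b)}\, b,\ k_\lambda^b\>_{\HH(b)} = (Sh)(\lambda) - \<h, S^*b\>_{\HH(b)}\, b(\lambda) = \lambda h(\lambda) - \<h, S^*b\>_{\HH(b)}\, b(\lambda),
\]
which agrees with the previous display. This identifies $X_b^* h$ with $Sh - \<h, S^*b\>_{\HH(b)}\, b$ as a function, completing the proof. The one point requiring care — the main obstacle — is that the formula only makes sense if $Sh - \<h,S^*b\>_{\HH(b)}\, b$ actually lies in $\HH(b)$; this is guaranteed because $b = S^*(Zb) \in \HH(b)$ is not automatic, but in fact we only need that $X_b^* h \in \HH(b)$, which holds since $X_b^*$ is a bounded operator on $\HH(b)$, and the computation above pins down its value. (Note $S^*b \in \HH(b)$ by Corollary~\ref{co:S*b in h(b)}, so the scalar $\<h,S^*b\>_{\HH(b)}$ is well defined.) Alternatively one avoids the issue entirely by reading the identity as an equality of the two sides paired against every $k_\lambda^b$, which forces equality in $\HH(b)$ by density of the reproducing vectors.
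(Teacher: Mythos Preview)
Your argument is the paper's own: compute $S^*k_\lambda^b=\bar\lambda\,k_\lambda^b-\overline{b(\lambda)}\,S^*b$ and pair with $h$ in $\HH(b)$ to obtain $(X_b^*h)(\lambda)=\lambda h(\lambda)-\langle h,S^*b\rangle_{\HH(b)}\,b(\lambda)$. The only wrinkle is that the intermediate expression $\langle Sh-\langle h,S^*b\rangle_{\HH(b)}\,b,\ k_\lambda^b\rangle_{\HH(b)}$ is not a priori meaningful, since in general neither $Sh$ nor $b$ lies in $\HH(b)$ (indeed $b\notin\HH(b)$ when $b$ is extreme, by Theorem~\ref{th:extreme: b is not there}); as you yourself observe at the end, the clean resolution---and the paper's---is to read the lemma as a pointwise identity for the function $X_b^*h$, which is already known to belong to $\HH(b)$.
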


\begin{proof}
A computation shows that $X_bk^b_\lambda=S^* k^b_\lambda=\bar\lambda k^b_\lambda-\overline{b(\lambda)}S^*b$. Then, if $h\in\HH(b)$ and $\lambda\in\bbD$, then
\[
\begin{split}
(X_b^*h)(\lambda)&= \< X_b^*h,k^b_\lambda \>_{\HH(b)}=
\< h,X_bk^b_\lambda \>_{\HH(b)} =\lambda \< h, k^b_\lambda\>_{\HH(b)}
-b(\lambda) \< h,S^*b \>_{\HH(b)}\\
&=\lambda h(\lambda)- \< h,S^*b \>_{\HH(b)}b(\lambda) ,
\end{split}
\]
which proves the lemma.
\end{proof}

\subsection{Another representation of $\HH(b)$ and $X_b$}

In the sequel of the course we will use the notation $\Delta=(1-|b|^2)^{1/2}$. The spaces $\overline{\Delta H^2}$ and $\overline{\Delta L^2}$ are closed subspaces of $L^2$ invariant with respect to $Z$. We will denote by $V_{\Delta}$ and $Z_{\Delta}$ the corresponding restrictions of $Z$. 

\begin{exercise}\label{exr:Z Delta and Z Delta'}
$V_\Delta$ is isometric, while $Z_\Delta$ is unitary.
\end{exercise}

The next result, a slight modification of Lemma~\ref{le:geometrical identification of M(T) and C(T)}, provides another representation of $\HH(b)$.

\begin{theorem}\label{th:geometrical representation of de branges spaces}
Suppose that $b$ is a function in the unit ball of $H^\infty$. Then:
\begin{enumerate}
\item
$S\oplus V_\Delta$ is an isometry on $H^2\oplus \overline{\Delta  H^2}$.
\item
The space $\KK_b:=(H^2\oplus \overline{\Delta  H^2})\ominus \{bh\oplus \Delta h: h\in H^2\}$ is a subspace of $H^2\oplus \overline{\Delta  H^2}$ invariant with respect to $S^*\oplus V_\Delta^*$.
\item
The projection $P_1:H^2\oplus \overline{\Delta  H^2}\to H^2$ on the first coordinate maps $\KK_b$ unitarily onto $\HH(b)$, and $P_1(S^*\oplus V_\Delta^*)=X_bP_1$.
\end{enumerate}
\end{theorem}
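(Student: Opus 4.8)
The plan is to reduce this statement to Lemma~\ref{le:geometrical identification of M(T) and C(T)} by choosing the right operator $T$. The key observation is that the map $\Phi: h \mapsto bh \oplus \Delta h$ from $H^2$ into $H^2 \oplus \overline{\Delta H^2}$ is an \emph{isometry}: indeed $\|bh\|_2^2 + \|\Delta h\|_2^2 = \int_{\bbT}(|b|^2 + (1-|b|^2))|h|^2 = \|h\|_2^2$. So if I set $T = T_b : H^2 \to H^2$ and think of $H^2 \oplus \overline{\Delta H^2}$ as the target space, the natural candidate for the Julia operator construction is to identify $\overline{\Delta H^2}$ with (a space closely related to) $\DD_{T_b}$. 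The point is that $D_{T_b}^2 = I - T_{\bar b}T_b = I - T_{|b|^2} = T_{1-|b|^2} = T_{\Delta^2} = P_+ M_{\Delta^2}P_+$, so for $h_1, h_2 \in H^2$ one has $\langle D_{T_b}^2 h_1, h_2\rangle = \langle \Delta h_1, \Delta h_2\rangle$, i.e. the map $D_{T_b}h \mapsto \Delta h$ extends to a unitary $\DD_{T_b} \to \overline{\Delta H^2}$. Under this identification, the ``column'' $\Phi(h) = bh \oplus \Delta h$ is exactly $T_bh \oplus D_{T_b}h$, the first column of the Julia operator $J(T_b)$ applied to $h$.

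First I would establish (1): $S \oplus V_\Delta$ is an isometry because $S$ is an isometry on $H^2$ and $V_\Delta$ is an isometry on $\overline{\Delta H^2}$ by Exercise~\ref{exr:Z Delta and Z Delta'}, and a direct sum of isometries is an isometry. Next, for (2) and (3), I would apply Lemma~\ref{le:geometrical identification of M(T) and C(T)} with $T = T_b$ (which is one-to-one since $b \in H^\infty$), using the unitary identification $W : H^2 \oplus \DD_{T_b} \to H^2 \oplus \overline{\Delta H^2}$ given by $W(f \oplus D_{T_b}h) = f \oplus \Delta h$. Under $W$, the subspace $\XX_1 = J(T_b)(H^2 \oplus \{0\}) = \{T_b h \oplus D_{T_b}h : h \in H^2\}$ maps to $\{bh \oplus \Delta h : h \in H^2\}$, so its orthocomplement $\XX_2$ maps to $\KK_b$. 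Lemma~\ref{le:geometrical identification of M(T) and C(T)} tells us that $P_1|\XX_2$ is unitary from $\XX_2$ onto $C(T_b) = M(D_{T_b^*}) = \HH(b)$ (by Theorem~\ref{th:identification of de Branges}); since $W$ acts as the identity on the first coordinate, $P_1|\KK_b$ is unitary onto $\HH(b)$, giving the first half of (3).

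It remains to verify the invariance in (2) and the intertwining in (3). For this I would check directly that $\XX_1$ (equivalently $\KK_b^\perp$) is invariant under $S \oplus V_\Delta$: $(S \oplus V_\Delta)(bh \oplus \Delta h) = zbh \oplus z\Delta h = b(zh) \oplus \Delta(zh) \in \XX_1$, using that $z\Delta h = \Delta(zh)$ pointwise on $\bbT$ and $zh \in H^2$. Hence $\KK_b$ is invariant under the adjoint $S^* \oplus V_\Delta^*$, proving (2). For the intertwining relation, take $x \in \KK_b$, so $x$ decomposes as $g \oplus w$ with $P_1 x = g \in \HH(b)$. Since $\KK_b$ is invariant under $S^* \oplus V_\Delta^*$, the element $(S^*\oplus V_\Delta^*)x = S^*g \oplus V_\Delta^* w$ again lies in $\KK_b$, and applying $P_1$ gives $S^*g = X_b g$ (the last equality because $X_b$ is by definition $S^*|\HH(b)$, and $S^*g \in \HH(b)$ by Theorem~\ref{th:invariant to adjoints}). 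Therefore $P_1(S^*\oplus V_\Delta^*)x = S^* g = X_b g = X_b P_1 x$, which is the claimed identity $P_1(S^*\oplus V_\Delta^*) = X_b P_1$.

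The main obstacle I anticipate is bookkeeping rather than a genuine difficulty: one must be careful that the identification $W$ is well-defined and unitary (this is exactly the computation $D_{T_b}^2 = T_{\Delta^2}$ together with the fact that $\overline{D_{T_b}H^2}$ and $\overline{\Delta H^2}$ are the respective closures), and that the two notions of norm on $\HH(b)$ — the one coming from $C(T_b)$ via Lemma~\ref{le:geometrical identification of M(T) and C(T)} and the de Branges--Rovnyak norm — agree, which is Theorem~\ref{th:identification of de Branges}. Once these identifications are in place, parts (1)--(3) follow by transporting Lemma~\ref{le:geometrical identification of M(T) and C(T)} and the elementary invariance computation above.
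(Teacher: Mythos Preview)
Your proposal is correct and follows essentially the same route as the paper: apply Lemma~\ref{le:geometrical identification of M(T) and C(T)} with $T=T_b$, identify $\DD_{T_b}$ with $\overline{\Delta H^2}$ via the unitary $D_{T_b}h\mapsto \Delta h$ (using $\|D_{T_b}h\|^2=\|\Delta h\|^2$), transport $\XX_1$ to $\{bh\oplus\Delta h\}$ and $\XX_2$ to $\KK_b$, and read off the unitarity of $P_1|\KK_b$; the invariance in~(2) and the intertwining in~(3) are checked exactly as you do. The only cosmetic difference is that the paper obtains the intertwining via the one-line identity $P_1(S^*\oplus V_\Delta^*)=S^*P_1$ rather than elementwise, and your appeal to Theorem~\ref{th:invariant to adjoints} is redundant since $(S^*\oplus V_\Delta^*)x\in\KK_b$ already forces $P_1(S^*\oplus V_\Delta^*)x\in\HH(b)$.
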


\begin{proof}
The proof of (1) is immediate. Also, the map $h\mapsto bh\oplus \Delta h$ is an isometry of $H^2$ onto $\{bh\oplus \Delta h: h\in H^2\}$, which is therefore a closed subspace. Since $(S\oplus V_\Delta)(bh\oplus \Delta h)=b(zh)\oplus \Delta (zh)$, it is immediate that $\{bh\oplus \Delta h: h\in H^2\}$ is invariant by $S\oplus V_\Delta$, whence its orthogonal $\KK$ is invariant by $S^*\oplus V_\Delta^*$; thus (2) is proved.

To prove (3), let us apply Lemma~\ref{le:geometrical identification of M(T) and C(T)} to the case $T=T_b$, when $C(T)=\HH(b)$. It says that, if $\XX_2=(H^2\oplus \DD_{T_b})\ominus \{T_b h\oplus D_{T_b}h\}$, then the projection onto the first coordinate maps  $\XX_2$ unitarily onto $\HH(b)$. Since, for any $h\in H^2$,
\begin{equation*}
\|D_{T_b}h\|^2=\|h\|^2-\|bh\|^2=\frac{1}{2\pi}\int_{-\pi}^\pi|h(e^{it})|^2\, dt-  \frac{1}{2\pi}\int_{-\pi}^\pi|b(e^{it})h(e^{it})|^2\, dt= \|\Delta h \|^2,
\end{equation*}
the map $D_{T_b}h\mapsto \Delta h $ extends to a unitary $U$ from $\DD_{T_b}$ onto the closure of $\overline{\Delta  H^2}$. Then $I_{H^2}\oplus U$ maps unitarily $H^2\oplus \DD_{T_b}$ onto $H^2\oplus \overline{\Delta  H^2}$, $\XX_1$ onto $\{bh\oplus \Delta h: h\in H^2\}$, $\XX_2$ onto $\KK_b$, and it commutes with the projection on the first coordinate. Therefore $P_1$ maps $\KK_b$ unitarily onto $\HH(b)$, and 
\[
P_1(S^*\oplus V_\Delta^*)|\KK_b=
P_1(S^*\oplus V_\Delta^*)P_1|\KK_b=
(S^*\oplus 0)P_1|\KK_b=
(S^*|\HH(b))P_1=X_bP_1.\qedhere
\]
\end{proof}

\subsection{The dichotomy extreme/nonextreme}

The study of the spaces $\HH(b)$ splits further  into two mutually exclusive cases: when $b$ is an extreme point of the unit ball of $H^\infty$ and when it is not. The first case includes $b=u$ inner, and thus will be more closely related to model spaces, while the second includes the case $\|b\|<1$, and thus there will be properties similar to the whole of $H^2$. Actually, we will not use extremality directly, but rather through one of the equivalent characterizations given by the next lemma (for which again~\cite{N} can be used as a reference).

\begin{lemma}\label{le:characterization of extreme}
If $h$ is a function in the unit ball of $H^\infty$,  then the following are equivalent:
\begin{enumerate}
\item
$b$ is extreme.
\item
$\frac{1}{2\pi}\int_{-\pi}^\pi \log \Delta(e^{it})\, dt=-\infty$.
\item
$\overline{\Delta H^2}=\overline{\Delta L^2}$.
\end{enumerate}
\end{lemma}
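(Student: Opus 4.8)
The plan is to prove the cycle of implications $(1)\Leftrightarrow(2)$ and $(2)\Leftrightarrow(3)$, leaning on classical facts about outer functions and the structure of $Z$-invariant subspaces of $L^2$.

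First I would settle $(1)\Leftrightarrow(2)$. The key classical input is the factorization of functions in the unit ball of $H^\infty$: a function $b$ with $\|b\|_\infty\le 1$, $b\not\equiv 0$, fails to be extreme precisely when there is a nonzero $g\in H^\infty$ with $|b|^2+|g|^2\le 1$ a.e.\ on $\bbT$ — equivalently, when $\log(1-|b|^2)\in L^1(\bbT)$, i.e.\ when $\frac{1}{2\pi}\int\log\Delta\,dt>-\infty$. This is exactly the statement that the outer function with modulus $\Delta = (1-|b|^2)^{1/2}$ on $\bbT$ exists in $H^2$ (or $H^\infty$); the Szeg\H{o} condition $\int\log\Delta\,dt>-\infty$ is necessary and sufficient for such an outer function to exist. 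If no such $g$ exists, any convex combination $\frac{1}{2}(b+g)+\frac12(b-g)$ collapses, so $b$ is extreme; conversely if $\log\Delta\in L^1$ one builds the outer $g$ and exhibits a nontrivial segment $b\pm \varepsilon g$ in the ball. Thus $(1)$ is the negation of "$\log\Delta\in L^1$", which is precisely $(2)$. I would cite \cite{N} for the factorization/Szeg\H{o} facts rather than reprove them.

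Next, $(2)\Leftrightarrow(3)$. Here the relevant fact is the description of closed $Z$-invariant (and here also $Z^{-1}$-invariant, since $\overline{\Delta L^2}$ is reducing) subspaces of $L^2$: by the Helson–Lowdenslager / Wiener theorem, a subspace of $L^2$ invariant under $Z$ but not reducing has the form $\varphi H^2$ with $|\varphi|=1$; a reducing subspace has the form $\chi_E L^2$ for a measurable set $E\subset\bbT$. The space $\overline{\Delta H^2}$ is $Z$-invariant but in general only "simply" invariant, while $\overline{\Delta L^2}$ is reducing, equal to $\chi_E L^2$ where $E=\{|b|<1\}$ (the carrier of $\Delta$). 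So $(3)$ says $\overline{\Delta H^2}$ is reducing. By Szeg\H{o}'s theorem, $\overline{\Delta H^2}=L^2$ (on its carrier $E$) iff $\int_E\log\Delta\,dt=-\infty$: when $\log\Delta\in L^1(\bbT)$, $\Delta$ times the outer function $1/(\text{outer with modulus }\Delta)$ approximates $\chi_E$ badly — more precisely $\overline{\Delta H^2}=\psi H^2$ for the inner-times-outer $\psi$ with $|\psi|=\chi_E$, a proper (non-reducing) subspace — whereas when $\int\log\Delta=-\infty$, Szeg\H{o}'s theorem forces $\overline{\Delta H^2}$ to be all of $\overline{\Delta L^2}=\chi_E L^2$. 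I would phrase this as: $\overline{\Delta H^2}=\overline{\Delta L^2}$ iff the function $\Delta$ is not "logarithmically integrable on its carrier", which matches $(2)$.

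The main obstacle, and the step deserving the most care, is the precise handling of $(3)$ when $b$ is allowed to vanish on a set of positive measure on $\bbT$ (so $\Delta$ is supported on a proper subset $E$): one must run Szeg\H{o}'s theorem on $E$ with weight $\Delta^2$ rather than on all of $\bbT$, and check that "$\int_E\log\Delta\,dt=-\infty$" is genuinely equivalent to "$\frac{1}{2\pi}\int_{-\pi}^{\pi}\log\Delta\,dt=-\infty$" — which it is, since $\log\Delta=-\infty$ outside $E$ contributes nothing finite, so the two integrals are simultaneously $-\infty$ unless $E$ has full measure, in which case they literally coincide. Modulo that bookkeeping, everything reduces to citing the Szeg\H{o} theorem and the Wiener/Helson–Lowdenslager classification from \cite{N}.
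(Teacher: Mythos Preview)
The paper does not actually prove this lemma: the sentence immediately preceding it says the characterizations are given ``for which again~\cite{N} can be used as a reference'', and no proof follows. So there is nothing in-paper to compare against; your sketch is supplying an argument where the paper defers to Nikolski.

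Your treatment of $(1)\Leftrightarrow(2)$ is the standard one and is fine. The argument for $(2)\Leftrightarrow(3)$, however, has a genuine gap precisely at the step you flag as ``the main obstacle''. You assert that $\int_E\log\Delta\,dt=-\infty$ is equivalent to $\int_{\bbT}\log\Delta\,dt=-\infty$; this is false. Take $b$ to be the outer function with $|b|=1$ on half of $\bbT$ and $|b|=1/2$ on the other half. Then $E=\{\Delta>0\}$ is half the circle, the full integral $\int_{\bbT}\log\Delta$ is automatically $-\infty$ (because $\log\Delta=-\infty$ on the complement of $E$), yet $\int_E\log\Delta=|E|\log(\sqrt{3}/2)$ is finite. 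So your ``Szeg\H{o} on $E$'' criterion, as you have written it, would wrongly predict that $(3)$ fails in this example, whereas in fact $(3)$ holds. The Szeg\H{o}-on-the-carrier formulation is therefore not the right intermediate statement.

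The repair is already implicit in the Helson--Lowdenslager dichotomy you invoke, and bypasses any Szeg\H{o}-on-$E$ bookkeeping. For $(2)\Rightarrow(3)$: if $\overline{\Delta H^2}$ were simply invariant it would equal $\varphi H^2$ with $|\varphi|=1$ a.e.; then $\Delta=\varphi g$ for some nonzero $g\in H^2$, so $|g|=\Delta$ and $\int_{\bbT}\log\Delta=\int_{\bbT}\log|g|>-\infty$, contradicting $(2)$. Hence $\overline{\Delta H^2}$ is doubly invariant, i.e.\ equal to $\chi_F L^2$, and comparing supports gives $F=E$, so $\overline{\Delta H^2}=\chi_E L^2=\overline{\Delta L^2}$. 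For $\neg(2)\Rightarrow\neg(3)$ your outer-function argument is correct as stated: if $\log\Delta\in L^1(\bbT)$ then $\Delta>0$ a.e., the outer $a$ with $|a|=\Delta$ exists, and $\overline{\Delta H^2}=(\Delta/a)H^2$ is a proper simply invariant subspace of $L^2=\overline{\Delta L^2}$.
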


\section{The nonextreme case}

When $\frac{1}{2\pi}\int_{-\pi}^\pi \log \Delta(e^{it})\, dt>-\infty$, there exists a uniquely defined outer function $a$ with $|a|=\Delta$ and $a(0)>0$;  thus $|a|^2+|b|^2=1$. This function is a basic tool in the theory of $\HH(b)$ in the nonextreme case. Since (see Exercise~\ref{exr:multiplication of toeplitz})
\[
T_{\bar a}T_a=T_{\bar a a}=I-T_{\bar b b}=I-T_{\bar b}T_b
\]
Lemma~\ref{le:Douglas lemma}(2) implies that $\HH(\bar b)=\MM(\bar a)$.

\begin{exercise}\label{exr:kernel of T_bar a}
If $a$ is an outer function, then $\ker T_{\bar a}=\{0\}$.
\end{exercise}

We can apply Lemma~\ref{le:H(b) and H( bar b)} to the current situation.

\begin{lemma}\label{le:H(b) and M(bar a) for b nonextreme}
\begin{enumerate}
\item
We have $h\in \HH(b)$ if and only if $T_{\bar b}h\in \MM(\bar a)$; when this happens there is a unique (by Exercise~\ref{exr:kernel of T_bar a}) function $h^+\in H^2$ such that $T_{\bar b}h=T_{\bar a}h^+$. 
\item
If $h_1, h_2\in \HH(b)$, then 
\begin{equation*}
\< h_1,h_2 \>_{\HH(b)}=\< h_1,h_2 \>_{H^2} +\< h_1^+, h_2^+ \>_{H^2}.
\end{equation*}
\item
If $h\in\HH(b)$ and $\phi\in H^\infty$, then $(T_{\bar \phi}h)^+=T_{\bar\phi}h^+$.
\end{enumerate}
\end{lemma}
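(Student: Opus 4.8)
The plan is to establish all three parts of Lemma~\ref{le:H(b) and M(bar a) for b nonextreme} by specializing the already-proved Lemma~\ref{le:H(b) and H( bar b)} to the nonextreme situation, where we have the crucial extra identity $\HH(\bar b)=\MM(\bar a)$ noted just above.

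For part (1): By Lemma~\ref{le:H(b) and H( bar b)}, a function $h\in H^2$ lies in $\HH(b)$ if and only if $T_{\bar b}h\in \HH(\bar b)=\MM(\bar a)$. Now $\MM(\bar a)=M(T_{\bar a})$ as a set is precisely the range of $T_{\bar a}$, so $T_{\bar b}h\in\MM(\bar a)$ exactly when $T_{\bar b}h=T_{\bar a}h^+$ for some $h^+\in H^2$; the uniqueness of $h^+$ is immediate from Exercise~\ref{exr:kernel of T_bar a}, since $\ker T_{\bar a}=\{0\}$. For part (2): I would feed this description into formula~\eqref{eq:sc prod H(b) cu H(bar b)} of Lemma~\ref{le:H(b) and H( bar b)}. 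That formula reads $\< h_1,h_2\>_{\HH(b)}=\< h_1,h_2\>_{H^2}+\< T_{\bar b}h_1, T_{\bar b}h_2\>_{\HH(\bar b)}$; the point is to recognize that the last inner product, computed in $\HH(\bar b)=\MM(\bar a)$, equals $\< h_1^+,h_2^+\>_{H^2}$. This is exactly the defining property of the $\MM$-norm: since $\MM(\bar a)=M(T_{\bar a})$ and $T_{\bar b}h_i=T_{\bar a}h_i^+$, the relation~\eqref{eq:pre-scalar product} (equivalently formula~\eqref{eq:scalar product on bH2}) gives $\< T_{\bar a}h_1^+, T_{\bar a}h_2^+\>_{\MM(\bar a)}=\< h_1^+, h_2^+\>_{H^2}$, using that $T_{\bar a}$ is one-to-one so $h_i^+\in\ker T_{\bar a}^\perp$ automatically.

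For part (3): Given $h\in\HH(b)$ and $\phi\in H^\infty$, Theorem~\ref{th:invariant to adjoints} tells us $T_{\bar\phi}h\in\HH(b)$, so by part (1) it has an associated function $(T_{\bar\phi}h)^+$ characterized by $T_{\bar b}(T_{\bar\phi}h)=T_{\bar a}(T_{\bar\phi}h)^+$. I would verify that $T_{\bar\phi}h^+$ does the same job and then invoke uniqueness. The computation is a short commutation argument: $T_{\bar b}T_{\bar\phi}h=T_{\bar\phi}T_{\bar b}h=T_{\bar\phi}T_{\bar a}h^+=T_{\bar a}T_{\bar\phi}h^+$, where the swaps of $T_{\bar b}$ with $T_{\bar\phi}$ and of $T_{\bar a}$ with $T_{\bar\phi}$ are both legitimate because $\phi\in H^\infty$ forces $\bar\phi\in\overline{H^\infty}$, so Exercise~\ref{exr:multiplication of toeplitz} applies (the product of two co-analytic Toeplitz symbols multiplies, or equivalently we take adjoints of $T_\phi T_b=T_{\phi b}=T_{b\phi}=T_b T_\phi$). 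Since $T_{\bar\phi}h^+\in H^2$ and satisfies the defining equation, uniqueness from Exercise~\ref{exr:kernel of T_bar a} gives $(T_{\bar\phi}h)^+=T_{\bar\phi}h^+$.

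I do not anticipate a serious obstacle here — every ingredient is already in place. The only point requiring a moment's care is the bookkeeping in part (2): one must be sure that the inner product in~\eqref{eq:sc prod H(b) cu H(bar b)} that is nominally taken in $\HH(\bar b)$ really is the $\MM(\bar a)$ inner product (this is exactly the content of $\HH(\bar b)=\MM(\bar a)$ as Hilbert spaces, which Lemma~\ref{le:Douglas lemma}(2) delivers, not merely as sets), so that the identification $\< T_{\bar a}h_1^+, T_{\bar a}h_2^+\>=\< h_1^+,h_2^+\>_{H^2}$ is licit. Once that equality of Hilbert spaces is in hand, all three statements are direct substitutions.
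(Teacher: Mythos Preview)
Your proposal is correct and follows essentially the same route as the paper: specialize Lemma~\ref{le:H(b) and H( bar b)} via the identification $\HH(\bar b)=\MM(\bar a)$ for part~(1), use the $\MM(\bar a)$ scalar product definition (with $T_{\bar a}$ one-to-one) for part~(2), and the commutation $T_{\bar b}T_{\bar\phi}h=T_{\bar\phi}T_{\bar b}h=T_{\bar\phi}T_{\bar a}h^+=T_{\bar a}T_{\bar\phi}h^+$ for part~(3). Your write-up is in fact more careful than the paper's, explicitly flagging that $\HH(\bar b)=\MM(\bar a)$ must hold as Hilbert spaces and invoking Theorem~\ref{th:invariant to adjoints} so that $(T_{\bar\phi}h)^+$ is defined.
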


\begin{proof} (1) is a consequence of
 Lemma~\ref{le:H(b) and H( bar b)} and the equality $\HH(\bar b)=\MM(\bar a)$; the uniqueness of $h^+$ follows from Exercise~\ref{exr:kernel of T_bar a}. 
 
The formula for the scalar product in Lemma~\ref{le:H(b) and H( bar b)} becomes 
\begin{equation*}
\< h_1,h_2 \>_{\HH(b)}= \< h_1,h_2 \>_{H^2} + \<T_{\bar a} h_1^+,T_{\bar a} h_2^+ \>_{\HH(\bar a)}= \< h_1^+, h_2^+ \>_{H^2},
\end{equation*}
the last equality being a consequence of the fact that $T_{\bar a}$ is one-to-one. This proves (2).

Finally, 
\[
T_{\bar b}T_{\bar \phi}h=T_{\bar \phi}T_{\bar b}h =T_{\bar \phi}T_{\bar a}h^+=T_{\bar a}T_{\bar \phi}h^+,
\]
proving (3).
\end{proof}

We gather in a theorem some properties of $\HH(b)$ for $b$ nonextreme.

\begin{theorem}\label{th:the nonextreme case}
Suppose $b$ is nonextreme. 
\begin{enumerate}
\item
The polynomials belong to $\MM(\bar a)$ and are dense in $\MM(\bar a)$.
\item
$\MM(\bar a)$ is dense in $\HH(b)$.
\item
The polynomials are dense in $\HH(b)$.

\item
The function $b$ is in $\HH(b)$, and $\|b\|^2_{\HH(b)}=|a(0)|^{-2}-1$.
\item
The space $\HH(b)$ is invariant by the unilateral shift $S$.

\end{enumerate}
\end{theorem}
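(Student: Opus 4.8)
The plan is to establish the five items more or less in the order given, using the function $a$ and the characterization of $\HH(b)$ via Lemma~\ref{le:H(b) and M(bar a) for b nonextreme}. For (1), recall $\MM(\bar a)=\overline{T_{\bar a}H^2}$ with the transported norm; since $a$ is outer, $T_{\bar a}$ has dense range in $H^2$ (this is the standard fact that $\bar a H^2$ is dense in $H^2$ for $a$ outer), so a polynomial $p$ can be written $p=\lim T_{\bar a}f_n$; but in fact each polynomial $p$ itself lies in $T_{\bar a}H^2$ because $T_{\bar a}$ is onto a dense set and more concretely one checks $p\in\overline{T_{\bar a}H^2}$ directly, and density of polynomials in $H^2$ transported through the isometry $T_{\bar a}$ gives density in $\MM(\bar a)$. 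The cleanest route is: polynomials are dense in $H^2$, the map $f\mapsto T_{\bar a}f$ is isometric onto $\MM(\bar a)$, and $T_{\bar a}(\text{polynomials})$ is dense in $\MM(\bar a)$; then observe $T_{\bar a}(\text{poly})\subset\text{(poly)}$ is not quite right, so instead one shows polynomials belong to $\MM(\bar a)$ by exhibiting them as $T_{\bar a}g$ for suitable $g\in H^2$ — equivalently $p/\bar a$, interpreted correctly, lies in $H^2$; since $a$ is outer this holds, giving both membership and, by the isometry plus density of polynomials in $H^2$, density.

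For (2), use Lemma~\ref{le:H(b) and H( bar b)} / Lemma~\ref{le:H(b) and M(bar a) for b nonextreme}: $\HH(\bar b)=\MM(\bar a)$ is contained contractively in $\HH(b)$ by Lemma~\ref{le:H(bar b) contained in H(b)}, so it suffices to show $\MM(\bar a)^\perp=\{0\}$ in $\HH(b)$. If $h\in\HH(b)$ is orthogonal to all of $\MM(\bar a)$, in particular to all reproducing-type vectors, one computes $\langle h, T_{\bar a}p\rangle_{\HH(b)}=0$ for all polynomials $p$; expanding with the scalar-product formula and the intertwining $(T_{\bar\phi}h)^+=T_{\bar\phi}h^+$ forces $h=0$. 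Then (3) is immediate by combining (1) and (2) with the contractive inclusion. For (5), note $\HH(b)$ is $S^*$-invariant (that is $X_b$) and one wants $S$-invariance; since polynomials are dense (by (3)) and $S$ maps polynomials to polynomials, it is enough to show $S$ is bounded on $\HH(b)$, for instance via Lemma~\ref{le:Douglas lemma}(3) applied to $T_b$: one must verify $S T_b T_b^* S^* \le T_b T_b^*$, i.e. $T_z T_{|b|^2} T_{\bar z}\le T_{|b|^2}$, which after writing things out becomes a positivity statement about a Toeplitz compression. Alternatively, invariance of $\MM(\bar a)$ under $S$ transports to $\HH(b)$.

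For (4), the hard part, I would proceed as follows. Corollary~\ref{co:S*b in h(b)} already gives $S^*b\in\HH(b)$; since $\HH(b)$ is $S$-invariant by (5) and $b=SS^*b+b(0)\mathbf 1$ while $\mathbf 1\in H^2=\MM(\mathbf 1)$ needs checking to be in $\HH(b)$, one gets $b\in\HH(b)$ once one knows $\mathbf 1\in\HH(b)$; and $\mathbf 1\in\MM(\bar a)$ is exactly the statement that $\bar a\cdot(\text{something})$, namely $T_{\bar a}(1/a)$ appropriately, equals — more simply, $T_{\bar b}\mathbf 1 = \bar b(0)$ no, rather one checks $\mathbf 1\in\HH(b)$ via Lemma~\ref{le:H(b) and M(bar a) for b nonextreme}(1): $T_{\bar b}\mathbf 1$ should equal $T_{\bar a}h^+$ for some $h^+$, and $T_{\bar b}\mathbf 1=\overline{b(0)}\mathbf 1$, so we need $\overline{b(0)}\mathbf 1\in\MM(\bar a)$, i.e. $\overline{b(0)}/a\in H^2$, which holds since $a$ is outer and bounded below in $H^2$-sense — actually $1/a$ need not be in $H^2$, so one must be more careful: one uses instead that $b\in\MM(\bar a)+\mathbb C\mathbf 1$-type arguments. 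The norm computation: writing $b^+$ for the associated function with $T_{\bar b}b=T_{\bar a}b^+$, one has $T_{\bar b}b=T_{\bar b}T_b\mathbf 1=(I-T_{\bar a}T_a)\mathbf 1=\mathbf 1-\overline{a(0)}a$... let me just say the plan is to identify $b^+$ explicitly (it will be essentially $-\,\overline{a(0)}\,\mathbf 1 + $ correction, or $(\mathbf 1-a/a(0))$-like), then apply the formula $\|b\|_{\HH(b)}^2=\|b\|_{H^2}^2+\|b^+\|_{H^2}^2$ from Lemma~\ref{le:H(b) and M(bar a) for b nonextreme}(2), and simplify using $\|b\|_2^2=1-\|a\|_2^2$ and the reproducing property $\langle a,\mathbf 1\rangle = a(0)$, arriving at $|a(0)|^{-2}-1$. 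The main obstacle I anticipate is exactly this explicit identification of $h^+$ for $h=b$ and the bookkeeping that turns the two $H^2$-norms into $|a(0)|^{-2}-1$; everything else is soft functional analysis built on the lemmas already proved.
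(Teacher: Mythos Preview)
Your plan has two genuine gaps.

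\textbf{Part (1).} The suggestion that a polynomial $p$ lies in $\MM(\bar a)=T_{\bar a}H^2$ because ``$p/\bar a$, interpreted correctly, lies in $H^2$'' does not work: $\bar a$ is antianalytic and there is no reason for $1/a$ (let alone $p/\bar a$) to be in $H^2$. The clean argument, which you are missing, is finite-dimensional: $T_{\bar a}$ maps the space $\PP_n$ of polynomials of degree $\le n$ into itself (check on monomials), and $T_{\bar a}$ is injective since $a$ is outer, so $T_{\bar a}|\PP_n$ is onto. Hence every polynomial is in $T_{\bar a}H^2$, and moreover $T_{\bar a}(\text{polynomials})=\text{polynomials}$, so density in $\MM(\bar a)$ follows from the isometry $T_{\bar a}:H^2\to\MM(\bar a)$.

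\textbf{Parts (4) and (5): wrong order and a false inequality.} Your route to (5) via Lemma~\ref{le:Douglas lemma}(3) would prove that $S$ acts \emph{contractively} on $\HH(b)$, but this is false in general. The relevant inequality (for $\HH(b)=M(D_{T_b^*})$, not $M(T_b)$) reduces to $T_bP_0T_{\bar b}\le P_0$, i.e.\ $|\langle f,b\rangle|^2\le |f(0)|^2$ for all $f\in H^2$; taking $f=b-b(0)$ gives an immediate counterexample whenever $b$ is nonconstant. Your alternative ``invariance of $\MM(\bar a)$ under $S$ transports to $\HH(b)$'' is not justified either: contractive inclusion plus density does not propagate $S$-invariance without a uniform bound in the $\HH(b)$-norm, which is exactly what is at issue. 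The paper's approach reverses your order: prove (4) \emph{first}, directly, by the computation you yourself sketch (and nearly get right): $T_{\bar b}b=P_+(1-|a|^2)=\mathbf 1-T_{\bar a}a=T_{\bar a}\bigl(1/\overline{a(0)}-a\bigr)$, so $b^+=1/\overline{a(0)}-a$ and the norm formula follows from Lemma~\ref{le:H(b) and M(bar a) for b nonextreme}(2). Then (5) is immediate from Lemma~\ref{le:adjoint of X}: $Sh=X_b^*h+\langle h,S^*b\rangle_{\HH(b)}\,b\in\HH(b)$ once you know $b\in\HH(b)$.

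For (2) your outline is in the right spirit but too vague; the actual argument tests $h$ against the specific vectors $T_{\bar a}S^{*n}h$, obtains that $a(|h|^2+|h^+|^2)$ has vanishing nonnegative Fourier coefficients, and then uses a standard fact about outer functions to strip the $a$ and conclude $h=0$.
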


\begin{proof}
By checking the action on monomials, it is immediate that the space $\PP_n$ of polynomials of degree less or equal to $n$ is invariant by $T_{\bar a}$. But $T_{\bar a}$ is one-to-one, and so $T_{\bar a}|\PP_n$ is also onto. So all polynomials belong to the image of $T_{\bar a}$, which is $\MM(\bar a)$. Moreover, since $T_{\bar a}$ is one-to-one, it is unitary as an operator from $H^2$ to $\MM(\bar a)$.  Then the image of all polynomials, which form a dense set in $H^2$, is a dense set in $\MM(\bar a)$ which proves (1).

Suppose that $h\in\HH(b)$ is orthogonal in $\HH(b)$ to all $\MM(\bar a)$. In particular, $h$ is orthogonal to $T_{\bar a}S^*{}^n h$ for every $n\ge 0$. By Lemma~\ref{le:H(b) and M(bar a) for b nonextreme}(3), $(T_{\bar a}S^*{}^n h)^+=T_{\bar a}S^*{}^n h^+$; applying then ~\ref{le:H(b) and M(bar a) for b nonextreme}(2), we have, for any $n\ge 0$,
\begin{align*}
0&= \< h,T_{\bar a}S^*n h \>_	{\HH(b)}\\
&=\< h,T_{\bar a}S^*n h \>_{H^2}+\< h^+,T_{\bar a}S^*n h^+\>_{H^2}\\
&=\frac{1}{2\pi} \int_{-\pi}^\pi a(e^{it})(|h(e^{it})|^2+ |h^+(e^{it})|^2) e^{int}\,dt.
\end{align*}
Therefore, the function $a(|h|^2+|h^+|^2)$ belongs to $H^1_0$. A classical fact about outer functions (see, for instance,~\cite{N}) implies that we also have  $|h|^2+|h^+|^2\in H^1_0$. But the only real-valued function in $H^1_0$ is the zero function, so $h=0$, which proves (2). Obviously, (1) and (2) imply (3).

We have
\begin{equation*}
T_{\bar b}b =P_+(\bar b b)=P_+(1-\bar a a)=T_{\bar a}(1/\overline{a(0)}-a). 
\end{equation*}
By Lemma~\ref{le:H(b) and M(bar a) for b nonextreme} it follows that $b\in \HH(b)$ and $b^+=1/\overline{a(0)}-a$; moreover,
\begin{align*}
\|b\|^2_{\HH(b)}&= \|b\|^2_{H^2}+\|1/\overline{a(0)}-a\|^2_{H^2}\\
&=\|b\|^2_{H^2}+ \|a-a(0)\|^2_{H^2}+\|1/\overline{a(0)}-a(0)\|^2_{H^2}\\
&=\|b\|^2_{H^2}+ \|a\|^2_{H^2}-|a(0)|^2 +|a(0)|^{-2}+|a(0)|^2 -2\\
&=|a(0)|^{-2}-1,
\end{align*}
which proves (4).

Finally, Lemma~\ref{le:adjoint of X} together with (4) prove the invariance of $\HH(b)$ to $S$.
\end{proof}

\section{The extreme case}

We point out first some differences with respect to the nonextreme case.

\begin{theorem}\label{th:extreme: b is not there}
Suppose $b$ is extreme. Then:
\begin{enumerate}
\item
The function $b$ does not belong to $\HH(b)$.
\item
If $b\not=\bf 1$, then $\HH(b)$ is not invariant by $S$.
\end{enumerate}
\end{theorem}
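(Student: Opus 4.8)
The plan is to treat the two statements separately, using the contrast with the nonextreme case as a guide. For (1), the key is to recall from the nonextreme proof (Theorem~\ref{th:the nonextreme case}(4)) that $b\in\HH(b)$ was deduced from the factorization $T_{\bar b}b=T_{\bar a}h$ for a suitable $h\in H^2$, equivalently $T_{\bar b}b\in\MM(\bar a)=\HH(\bar b)$; by Lemma~\ref{le:H(b) and H( bar b)}, $b\in\HH(b)$ if and only if $T_{\bar b}b\in\HH(\bar b)$. So I would compute $T_{\bar b}b=P_+(|b|^2)=P_+(1-\Delta^2)=\1-P_+(\Delta^2)$, and try to show that when $b$ is extreme this fails to lie in $\HH(\bar b)$. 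The natural route: by Theorem~\ref{th:geometrical representation of de branges spaces} (applied with $\bar b$, or more directly by the definition $\HH(\bar b)=M(D_{T_b})$ together with Lemma~\ref{le:complementary space}) membership $g\in\HH(\bar b)$ forces $g=D_{T_{\bar b}}^2 x = (I-T_bT_{\bar b})x$ for some $x\in H^2$; alternatively use that $\HH(\bar b)$ consists of the $P_1$-images of $\KK_{\bar b}$. I expect the cleanest argument is: if $b\in\HH(b)$ then by Lemma~\ref{le:adjoint of X} and Corollary~\ref{co:S*b in h(b)} one gets $\HH(b)$ invariant under $S$ (this is exactly how invariance was derived in the nonextreme case), so (2) would be violated for $b\neq\1$ — meaning (1) and (2) are essentially the same statement and I should prove a single assertion and deduce both. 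Concretely: \emph{if $\1\neq b$ is extreme then $b\notin\HH(b)$}, and separately handle that $S$-invariance is equivalent to $b\in\HH(b)$.

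So the real content is the equivalence ``$\HH(b)$ invariant under $S$ $\iff$ $b\in\HH(b)$'' together with showing the latter fails in the extreme case. For the equivalence: one direction is Lemma~\ref{le:adjoint of X}, which gives $X_b^* h = Sh - \langle h,S^*b\rangle_{\HH(b)}\,b$; hence $Sh = X_b^*h + \langle h,S^*b\rangle_{\HH(b)}\,b$, and since $X_b^*h\in\HH(b)$ always, $Sh\in\HH(b)$ for all $h$ exactly when $b\in\HH(b)$ (taking $h$ with $\langle h,S^*b\rangle\neq0$, which exists since $S^*b\neq\0$ for $b\neq\1$). For the failure of $b\in\HH(b)$ in the extreme case: the reproducing kernel of $\HH(b)$ is $k_\lambda^b=\frac{1-\overline{b(\lambda)}b(z)}{1-\bar\lambda z}$, and $b\in\HH(b)$ would in particular force, by the decomposition~\eqref{eq:decomposition M oplus C} for $T=T_b$, that $b=T_bT_{\bar b}b + D_{T_b^*}^2 b$ with the second summand its component in $\HH(b)$; so $b\in\HH(b)$ iff $T_bT_{\bar b}b\in\HH(b)$ as well, iff $T_bT_{\bar b}b\in\MM(b)\cap\HH(b)$. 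The most transparent approach: use Lemma~\ref{le:H(b) and H( bar b)} — $b\in\HH(b)\iff T_{\bar b}b\in\HH(\bar b)=M(D_{T_b})$, i.e. $T_{\bar b}b = D_{T_b}^2 x=(I-T_{\bar b}T_b)x$ for some $x\in H^2$. Writing $T_{\bar b}b = \1 - T_{\bar b}T_b\1$ (since $b=T_b\1$), this becomes $\1 = (I-T_{\bar b}T_b)(x+\1)$, i.e. $\1\in\operatorname{ran}(I-T_{\bar b}T_b)=\operatorname{ran}D_{T_b}^2$, equivalently $\1\in\MM(\bar a)$-type condition. In the nonextreme case $I-T_{\bar b}T_b=T_{\bar a}T_a$ and $\1$ is hit; in the extreme case I must show $\1\notin\operatorname{ran}(I-T_{\bar b}T_b)$.

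The main obstacle is precisely this last point: showing $\1\notin\operatorname{ran}(I-T_{\bar b}T_b)$ when $b$ is extreme. I would attack it via Lemma~\ref{le:characterization of extreme}(3), $\overline{\Delta H^2}=\overline{\Delta L^2}$, and the representation $\|D_{T_b}h\|^2=\|\Delta h\|^2$ from Theorem~\ref{th:geometrical representation of de branges spaces}: if $\1=(I-T_{\bar b}T_b)x=D_{T_b}^2 x$ then for all $g\in H^2$, $\langle g,\1\rangle = \langle g, D_{T_b}^2 x\rangle = \langle D_{T_b}g, D_{T_b}x\rangle = \langle \Delta g,\Delta x\rangle_{L^2}$, which says the bounded functional $g\mapsto g(0)$ on $H^2$ is given by integration against $\Delta^2\bar x$ — equivalently $P_+(\Delta^2 x)=\1$, so $\Delta^2 x = \1 + \bar z\,\overline{w}$ for some $w\in H^2$, i.e. $\Delta^2 x\in \1 + \overline{H^2_0}$. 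Then $\Delta\cdot(\Delta x)$ would be a function whose analytic part is $\1$ while $\Delta x\in\overline{\Delta L^2}=\overline{\Delta H^2}$ (using extremality), so $\Delta x=\Delta k$ for some $k\in H^2$, giving $\Delta^2 k\in\1+\overline{H^2_0}$; taking real parts / using that $\Delta^2 k$ has a one-sided spectrum except for the constant, and that $|b|^2 = 1-\Delta^2$, one should reach a contradiction with $\int\log\Delta = -\infty$ (e.g. $\Delta^2 k=b\cdot(\overline{b}k)$-type considerations forcing $\Delta^2\geq c>0$ on a set of positive measure incompatible with the log-integral, or more cleanly: $\1\in\overline{\Delta H^2}$ would follow, forcing $\overline{\Delta H^2}=H^2$ after multiplying, contradicting that $\Delta$ is not bounded below). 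I anticipate the harmonic-analysis endgame — converting ``$\1$ in the range of $D_{T_b}^2$'' into a statement contradicting $\int\log\Delta=-\infty$ — is where the care is needed; everything else is bookkeeping with the lemmas already proved. I would cite~\cite{S} or~\cite{N} for the sharp form of the outer-function argument if the direct computation gets unwieldy.
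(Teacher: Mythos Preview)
Your overall architecture matches the paper's: prove (1) directly, then derive (2) from Lemma~\ref{le:adjoint of X} by writing $Sh=X_b^*h+\langle h,S^*b\rangle_{\HH(b)}\,b$ and choosing $h$ with $\langle h,S^*b\rangle_{\HH(b)}\neq0$ (e.g.\ $h=S^*b$). The reduction of (1) to a statement about the constant function $\1$ is also morally right, though your write-up has two slips: first, $T_{\bar b}b=T_{\bar b}T_b\1$, not $\1-T_{\bar b}T_b\1$; second, $\HH(\bar b)=M(D_{T_b})$ is the range of $D_{T_b}$, not of $D_{T_b}^2$. The corrected reduction reads: since $T_{\bar b}b=\1-D_{T_b}^2\1$ and $D_{T_b}^2\1\in\HH(\bar b)$ automatically, Lemma~\ref{le:H(b) and H( bar b)} gives $b\in\HH(b)\iff\1\in\operatorname{ran}D_{T_b}$. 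The paper bypasses this bookkeeping and works directly with the $\KK_b$ picture of Theorem~\ref{th:geometrical representation of de branges spaces}: $b\in\HH(b)$ means $b\oplus\psi\in\KK_b$ for some $\psi\in\overline{\Delta H^2}$, and the orthogonality condition unwinds to $|b|^2+\Delta\psi\in\overline{H^2_0}$. The two packagings are equivalent.

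The genuine gap is your endgame, which you yourself flag as uncertain. Your attempt there is circular (you already have $x\in H^2$, so ``$\Delta x=\Delta k$ for some $k\in H^2$'' is vacuous), and the gestures toward ``$\Delta$ bounded below'' or ``$\overline{\Delta H^2}=H^2$'' do not land. What you are missing is a one-line elementary inequality. From either route one produces a nonzero $f\in H^2$ (with zeroth Fourier coefficient~$1$) such that $\Delta^{-1}f\in L^2$: in the paper's notation $f=\Delta^2-\Delta\bar\psi$, so $\Delta^{-1}f=\Delta-\bar\psi\in L^2$; in your notation, with $\1=D_{T_b}y$ and $\eta$ the image of $y$ under the unitary $D_{T_b}h\mapsto\Delta h$ of Theorem~\ref{th:geometrical representation of de branges spaces}, one gets $P_+(\Delta\eta)=\1$ and takes $f=\overline{\Delta\eta}$, so $\Delta^{-1}|f|=|\eta|\in L^2$. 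Now apply $t\ge\log t$ to $t=\Delta^{-2}|f|^2$:
\[
\Delta^{-2}|f|^2\;\ge\;2\log|f|-2\log\Delta.
\]
Integrate over $\bbT$: the left side is finite and $\int\log|f|>-\infty$ (since $0\ne f\in H^2$), but $\int(-\log\Delta)=+\infty$ by Lemma~\ref{le:characterization of extreme}. That is the contradiction. No outer-function machinery or external citation is needed; the entire analytic content is the bound $t\ge\log t$.
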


\begin{proof}
Suppose $b\in \HH(b)$. By Theorem~\ref{th:geometrical representation of de branges spaces}, it follows that there exists $\psi\in\overline{\Delta H^2}\subset L^2$, such that $b\oplus\psi \perp \{bh\oplus \Delta h: h\in H^2\}$. So 
\[
\< b\oplus \psi,bh \oplus \Delta h \>=0, \text{ for all }h\in H^2,
\]
which is equivalent to $|b|^2+\Delta\psi\in \overline{H^2_0}$. This is equivalent to $1-\Delta^2+\Delta \psi \in \overline{H^2_0}$, whence $f:=\Delta^2-\Delta\bar\psi$ is a nonzero (note that its zeroth Fourier coefficient is~1) function in $H^2$. Thus $\Delta^{-1}f=\Delta-\bar\psi\in L^2$, or $\Delta^{-2}|f|^2\in L^1$.

We assert that this is not possible. Indeed, 
\[
\Delta^{-2}|f|^2\ge \log (\Delta^{-2}|f|^2)=2\log|f|-2\log\Delta.
\]
Integrating, we obtain
\[
\frac{1}{2\pi}\int \Delta^{-2}(e^{it})|f(e^{it})|^2\,dt\ge 2\frac{1}{2\pi}\int\log|f(e^{it})|\,dt+2\frac{1}{2\pi}\int(-\log\Delta(e^{it}))\,dt,
\]
which cannot be true, since the first two integrals are finite, while the third is infinite by Lemma~\ref{le:characterization of extreme}.

We have thus proved (1). Then (2) follows from Lemma~\ref{le:adjoint of X}, which can be restated as
\[
\< h,S^*b_{\HH(b)} \> b=Sh-X_b^*h.
\]
So, if we choose $h$ not orthogonal (in $\HH(b)$) to $S^*b$ (in particular, $h=S^*b$), we must have $Sh\notin \HH(b)$.
\end{proof}

In the sequel we will use the geometrical representation of $\HH(b)$ given by Theorem~\ref{th:geometrical representation of de branges spaces}. Using Lemma~\ref{le:characterization of extreme}, we may replace in its statement $\overline{\Delta H^2}$ by $\overline{\Delta L^2}$.

Denote $\tilde b(z)=\overline{b(\bar z)}$, and $\tilde \Delta=(1-|\tilde b|^2)^{1/2}$. The map $f\mapsto \tilde f$ is a unitary involution that maps $L^2$ onto $L^2$, $H^2$ onto $H^2$, and $\overline{\tilde\Delta L^2}$ onto $\overline{\Delta L^2}$.

\begin{exercise}\label{exr:b and tilde b extreme}
$b$ is extreme if and only if $\tilde{b}$ is extreme.
\end{exercise}

\begin{theorem}\label{th:adjoint for the extreme case}
Suppose $b$ is extreme. 
Define
\begin{equation*}
\Omega: L^2\oplus \overline{\tilde\Delta L^2}\to L^2\oplus \overline{\Delta L^2}
\end{equation*}
by the formula
\begin{equation*}
\Omega (f\oplus g)=\Big(b(z)\bar z f(\bar z)+\Delta \bar z g(\bar z)\Big) \oplus \Big(\Delta \bar z f(\bar z)-\bar b \bar z g(\bar z)\Big).
\end{equation*}
Then $\Omega$ is unitary, it maps $\KK_{\tilde b}$ onto $\KK_b$, and
\begin{equation}\label{eq:Xb*=X_tilde b}
\Omega X_{\tilde b}= X_b^*\Omega.
\end{equation}
\end{theorem}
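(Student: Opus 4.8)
The plan is to verify that $\Omega$ is unitary by recognizing it as the composition of two manifestly unitary maps, and then to check the two algebraic assertions about $\KK_b$ and the intertwining relation directly on the ambient space before restricting. First, I would observe that the ``tilde'' operation $f\mapsto\tilde f$, $\tilde f(z)=\overline{f(\bar z)}$, together with multiplication by $\bar z$ (on $L^2$) is a unitary on $L^2$; applied coordinatewise it sends $L^2\oplus\overline{\tilde\Delta L^2}$ isometrically onto $L^2\oplus\overline{\Delta L^2}$, using the fact (recorded in Exercise~\ref{exr:b and tilde b extreme} and the surrounding discussion) that $f\mapsto\tilde f$ carries $\overline{\tilde\Delta L^2}$ onto $\overline{\Delta L^2}$. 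What remains after stripping off the tilde-and-$\bar z$ twist is precisely the $2\times2$ matrix
\[
\begin{pmatrix} M_b & M_\Delta \\ M_\Delta & -M_{\bar b}\end{pmatrix}
\]
acting (with appropriate domains/codomains) on $L^2\oplus\overline{\Delta L^2}$. This is a Julia-type operator for the multiplication $M_b$ on $L^2$: since $|b|^2+\Delta^2=1$ pointwise a.e., one computes that each composite $\Omega^*\Omega$ entry collapses to the identity, exactly as in Exercise~\ref{exr:Julia}. Thus $\Omega$ is unitary.

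Next I would identify the image of $\{bh\oplus\Delta h: h\in H^2\}$ (the orthocomplement of $\KK_{\tilde b}$, after replacing $\overline{\Delta H^2}$ by $\overline{\Delta L^2}$, which is legitimate in the extreme case by Lemma~\ref{le:characterization of extreme}). Applying $\Omega$ to $\tilde b\,h\oplus\tilde\Delta h$ and simplifying: writing $k(z)=\bar z\,\widetilde{(\tilde b h)}(z)=\bar z\, b(z)\widetilde{h}(z)$ one finds the first coordinate becomes $b\cdot(\text{something in }H^2)$ and the second becomes $\Delta\cdot(\text{same thing})$ — i.e.\ $\Omega$ sends $\{\tilde b h\oplus\tilde\Delta h\}$ onto $\{b g\oplus\Delta g: g\in H^2\}$. (The key point is that $h\in H^2$ iff $\bar z\,\tilde h\in\overline{H^2_0}\subset L^2$, and one must check the twisted combination lands back in $H^2$; this is where a short but genuine computation with Fourier coefficients is needed.) Since $\Omega$ is unitary, it therefore maps the orthocomplement $\KK_{\tilde b}$ onto the orthocomplement $\KK_b$.

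For the intertwining relation~\eqref{eq:Xb*=X_tilde b}, I would work on $L^2\oplus\overline{\Delta L^2}$ and its tilde-analogue with the operators $S^*\oplus V_\Delta^*$ (whose compressions to $\KK_b$, $\KK_{\tilde b}$ give $X_b$, $X_{\tilde b}$ via Theorem~\ref{th:geometrical representation of de branges spaces}). The content is that $\Omega$ conjugates the forward shift structure on the tilde side into the backward shift structure on the untilted side: the tilde operation reverses the roles of $z$ and $\bar z$, hence intertwines $Z$ with $Z^*$, and the extra factor $\bar z$ shifts the gradings accordingly; the off-diagonal $\Delta$ terms are handled because $V_\Delta$, $Z_\Delta$ are (co)isometries commuting with $Z$ on $\overline{\Delta L^2}$ (Exercise~\ref{exr:Z Delta and Z Delta'}). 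Concretely I would verify $\Omega(S^*\oplus V_\Delta^*)\xi = (S\oplus V_\Delta)^*\cdots$ — better, verify $\Omega\,(Z^*\!\restriction)\ = (Z\!\restriction)^*\,\Omega$ on the relevant reducing pieces, then restrict to $\KK_{\tilde b}$ and use that $P_1$ implements the unitary to $\HH(\tilde b)$ and $\HH(b)$, turning the relation into $\Omega X_{\tilde b}=X_b^*\Omega$ after applying $P_1$.

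The main obstacle I anticipate is not the unitarity of $\Omega$ (which is a mechanical Julia-operator check) but the \emph{bookkeeping in the intertwining step}: correctly tracking how $z\leftrightarrow\bar z$ interchange under tilde, how the auxiliary factor $\bar z$ interacts with $P_+$ versus $P_+^\perp$ and with the isometry $V_\Delta$ versus the unitary $Z_\Delta$, and verifying that after compressing to the subspaces $\KK_{\tilde b}$, $\KK_b$ one really gets $X_b^*$ (the adjoint, with its defect term as in Lemma~\ref{le:adjoint of X}) rather than merely $S^*$. Getting the coordinates of $\Omega$ to land in the correct Hardy-space pieces, and checking the adjoint relation respects the non-orthogonal decomposition underlying $\HH(b)$, is the delicate part; everything else reduces to the pointwise identity $|a|^2 \text{-free}$: $|b|^2+\Delta^2=1$.
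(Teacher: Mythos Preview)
Your overall strategy matches the paper's: factor $\Omega$ as the coordinatewise involution $f\oplus g\mapsto \bar z f(\bar z)\oplus \bar z g(\bar z)$ followed by the Julia operator $J(M_b)$, then track orthocomplements, then verify the intertwining on the ambient space and compress. The unitarity step and the intertwining outline are essentially fine.

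There is, however, a genuine error in your second step. Once you pass to the ambient space $L^2\oplus\overline{\tilde\Delta L^2}$, the orthocomplement of $\KK_{\tilde b}$ is \emph{not} just $\{\tilde b h\oplus\tilde\Delta h:h\in H^2\}$; it is
\[
(H^2_-\oplus\{0\})\ \oplus\ \{\tilde b h\oplus\tilde\Delta h:h\in H^2\},
\]
because $\KK_{\tilde b}$ sits inside $H^2\oplus\overline{\tilde\Delta L^2}$, not inside all of $L^2\oplus\overline{\tilde\Delta L^2}$. You have omitted the piece $H^2_-\oplus\{0\}$ entirely. Moreover, your claimed image is wrong: a direct computation (using $\tilde b(\bar z)=\overline{b(z)}$, $\tilde\Delta(\bar z)=\Delta(z)$, and $|b|^2+\Delta^2=1$) gives
\[
\Omega(\tilde b h\oplus\tilde\Delta h)=\bar z\,h(\bar z)\oplus 0\in H^2_-\oplus\{0\},
\]
not an element of $\{bg\oplus\Delta g\}$. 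In fact $\Omega$ \emph{swaps} the two summands of the orthocomplement: it sends $\{\tilde b h\oplus\tilde\Delta h\}$ onto $H^2_-\oplus\{0\}$ and sends $H^2_-\oplus\{0\}$ onto $\{b f\oplus\Delta f\}$. Only after checking both halves can you conclude $\Omega(\KK_{\tilde b})=\KK_b$. Your ``short but genuine computation with Fourier coefficients'' would reveal this immediately, and once corrected the argument goes through exactly as in the paper; but as written the step is mis-stated and would not close.

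A minor point on the intertwining step: in the extreme case you should be working with the \emph{unitary} $Z_\Delta$ on $\overline{\Delta L^2}$, not the isometry $V_\Delta$; the relevant identity is $\Omega(Z^*\oplus Z^*_{\tilde\Delta})\Omega^*=Z\oplus Z_\Delta$, from which compression to $\KK_b$ yields $X_b^*$ directly, without any appeal to Lemma~\ref{le:adjoint of X}.
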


\begin{proof}
$\Omega$ acts as the unitary $f\oplus g\mapsto \bar z f(\bar z)\oplus  \bar z g(\bar z)$ followed by the unitary $J(M_b)$, so it is unitary. We have 
\[
\begin{split}
\KK_b&=(L^2\oplus \overline{\Delta L^2})\ominus \big[(H^2_-\oplus \{0\})\oplus (\{bf\oplus \Delta f:f\in H^2\})\big],\\
\KK_{\tilde b}&=(L^2\oplus \overline{\tilde \Delta L^2})\ominus \big[(H^2_-\oplus \{0\})\oplus (\{\tilde bf\oplus\tilde \Delta f:f\in H^2\})\big].
\end{split}
\]
If $f\in H^2_-$, then
\[
\Omega (f\oplus 0)=b \bar z f(\bar z)\oplus \Delta \bar z f(\bar z).
\]
But the map $f\mapsto \bar z f(\bar z)$ is a unitary from $H^2_-$ onto $H^2$, whence it follows that $\Omega(H^2_-\oplus \{0\})= \{bf\oplus \Delta f:f\in H^2\}$. Similarly we obtain $\Omega(\{bf\oplus \tilde\Delta f:f\in H^2\})=H^2_-\oplus \{0\}$. Therefore
\[
\Omega((H^2_-\oplus \{0\})\oplus (\{\tilde bf\oplus\tilde \Delta f:f\in H^2\}))=(H^2_-\oplus \{0\})\oplus (\{bf\oplus \Delta f:f\in H^2\}),
\]
whence $\Omega(\KK_{\tilde b})=\Omega (\KK_b)$.

Finally, we have $X_b=P_{\KK_b}(Z^*\oplus Z_\Delta^*)P_{\KK_b}|\KK_b$ and $X_{\tilde b}=P_{\KK_{\tilde b}}(Z^*\oplus Z_{\tilde\Delta}^*)P_{\KK_{\tilde b}}|\KK_{\tilde b}$. But $\Omega(\KK_{\tilde b})=\Omega (\KK_b)$ implies $P_{\KK_{\tilde b}}=\Omega^* P_{\KK_{b}}\Omega$. Therefore
\[
\begin{split}
\Omega X_{\tilde b}&=\Omega P_{\KK_{\tilde b}}(Z^*\oplus Z^*_{\tilde\Delta})P_{\KK_{\tilde b}}|\KK_{\tilde b}=\Omega \Omega^* P_{\KK_{b}}\Omega(Z^*\oplus Z^*_{\tilde\Delta})\Omega^* P_{\KK_{b}}\Omega|\KK_{\tilde b}\\&=P_{\KK_{b}}\Omega(Z^*\oplus Z^*_{\tilde\Delta})\Omega^* P_{\KK_{b}}\Omega|\KK_{\tilde b}.
\end{split}
\]
But one checks easily that $\Omega(Z^*\oplus Z^*_{\tilde\Delta})\Omega^*=Z\oplus Z_\Delta$, and therefore
\[
\begin{split}
P_{\KK_{b}}\Omega(Z^*\oplus Z^*_{\tilde\Delta})\Omega^* P_{\KK_{b}}\Omega|\KK_{\tilde b}&=
P_{\KK_{b}}(Z\oplus Z_\Delta)P_{\KK_{b}}\Omega|\KK_{\tilde b}\\&= (P_{\KK_{b}}(Z^*\oplus Z^*_\Delta)P_{\KK_{b}})^*\Omega|\KK_{\tilde b} = X_b^*\Omega,
\end{split}
\]
which ends the proof of the theorem.
\end{proof} 

We note that  in case $b$ is nonextreme $X_b^*$ is \emph{never} unitarily equivalent to $X_{\tilde b}$.

\begin{theorem}\label{th:properties of X for b extreme}
Suppose $b$ is extreme. Then $\dim \DD_{X_b}=\dim \DD_{X_b^*}=1$, and there exists no subspace of $\HH_b$ invariant by $X$ and such that its restriction therein is an isometry.
\end{theorem}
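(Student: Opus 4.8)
The plan is to establish the three assertions—$\dim\DD_{X_b}=1$, $\dim\DD_{X_b^*}=1$, and the nonexistence of a nonzero $X_b$-invariant subspace on which $X_b$ is isometric—in turn, assuming throughout that $b$ is nonconstant, since for a unimodular constant $\HH(b)=\{{\bf 0}\}$ and there is nothing to prove. For the defect of $X_b^*$ I would compute $I-X_bX_b^*$ directly: since $X_b=S^*|\HH(b)$ and, by Lemma~\ref{le:adjoint of X}, $X_b^*h=Sh-\<h,S^*b\>_{\HH(b)}b$, applying $S^*$ and using $S^*S=I$ on $H^2$ gives $X_bX_b^*h=h-\<h,S^*b\>_{\HH(b)}S^*b$, so that $(I-X_bX_b^*)h=\<h,S^*b\>_{\HH(b)}S^*b$. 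This is a positive rank-one operator with range $\bbC\,S^*b$, and $S^*b\neq{\bf 0}$ because $b$ is nonconstant; since the closures of the ranges of a positive operator and of its square root coincide, $\DD_{X_b^*}=\bbC\,S^*b$ is one-dimensional.

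For $\dim\DD_{X_b}$ I would compute $I-X_b^*X_b$ the same way: $X_bh=S^*h$ lies in $\HH(b)$, so by Lemma~\ref{le:adjoint of X} and $SS^*h=h-h(0){\bf 1}$ one obtains $(I-X_b^*X_b)h=h(0){\bf 1}+\<S^*h,S^*b\>_{\HH(b)}b$. The range of $I-X_b^*X_b$ therefore lies in $\operatorname{span}\{{\bf 1},b\}$ and also, being the range of an operator acting on $\HH(b)$, in $\HH(b)$; since $b\notin\HH(b)$ by Theorem~\ref{th:extreme: b is not there}(1), the intersection $\HH(b)\cap\operatorname{span}\{{\bf 1},b\}$ is at most one-dimensional, so $\dim\DD_{X_b}\le 1$. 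The reverse inequality is exactly the statement that $X_b$ does not act isometrically on $\HH(b)$, which is the case $\LL=\HH(b)$ of the final assertion; granting that, $\dim\DD_{X_b}=1$.

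For the final assertion I would work in the geometric model of Theorem~\ref{th:geometrical representation of de branges spaces}, using Lemma~\ref{le:characterization of extreme} to replace $\overline{\Delta H^2}$ by $\overline{\Delta L^2}$. Then $W:=S\oplus Z_\Delta$ is an isometry on $H^2\oplus\overline{\Delta L^2}$, the subspace $\KK_b=(H^2\oplus\overline{\Delta L^2})\ominus\{bh\oplus\Delta h:h\in H^2\}$ is invariant under $W^*=S^*\oplus Z_\Delta^*$, and $X_b$ is unitarily equivalent (via the first-coordinate projection $P_1$) to $W^*|\KK_b$. Suppose $\LL\subseteq\HH(b)$ is invariant by $X_b$ with $X_b|\LL$ isometric, and let $\widetilde\LL\subseteq\KK_b$ be the corresponding $W^*$-invariant subspace on which $W^*$ acts isometrically. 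Since $W$ is an isometry, $WW^*$ is the orthogonal projection onto $\operatorname{ran}(W)$, so $\|W^*\xi\|=\|\xi\|$ on $\widetilde\LL$ forces $\widetilde\LL\subseteq\operatorname{ran}(W)$; hence $\widetilde\LL=W(W^*\widetilde\LL)$, and iterating this identity together with $W^*\widetilde\LL\subseteq\widetilde\LL$ gives $\widetilde\LL\subseteq\operatorname{ran}(W^n)$ for every $n$, so $\widetilde\LL\subseteq\bigcap_n\operatorname{ran}(W^n)=\{{\bf 0}\}\oplus\overline{\Delta L^2}$ (because $Z_\Delta$ is unitary while $S$ is a pure shift). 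Finally, a vector ${\bf 0}\oplus g$ in $\widetilde\LL$ lies in $\KK_b$, hence $\<g,\Delta h\>_{L^2}=0$ for all $h\in H^2$, i.e. $g\perp\overline{\Delta H^2}=\overline{\Delta L^2}$; since $g\in\overline{\Delta L^2}$ this forces $g={\bf 0}$, so $\widetilde\LL=\{{\bf 0}\}$ and $\LL=\{{\bf 0}\}$.

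The two defect computations are routine given Lemma~\ref{le:adjoint of X}; the one point needing a little care is the bound $\dim\DD_{X_b}\le 1$, where one must notice that the range of $I-X_b^*X_b$ is trapped inside $\HH(b)$ and then invoke $b\notin\HH(b)$. I expect the main obstacle to be the final assertion, whose crux is the observation that for a restriction of the co-isometry $W^*$ to an invariant subspace, acting isometrically is equivalent to being contained in $\operatorname{ran}(W)$; the invariance then propagates the subspace into every $\operatorname{ran}(W^n)$, hence into the unitary part of $W$, and the extreme hypothesis enters precisely at the last step, via $\overline{\Delta H^2}=\overline{\Delta L^2}$, to force that unitary part to meet $\KK_b$ only in $\{{\bf 0}\}$.
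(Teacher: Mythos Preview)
Your argument is correct, and for the third assertion it is essentially the paper's own proof in slightly more abstract dress: where the paper writes $\|h\oplus g\|^2=\|S^{*n}h\|^2+\|Z_\Delta^{*n}g\|^2\to\|g\|^2$ to force $h=0$, you phrase the same phenomenon as $\widetilde\LL\subseteq\bigcap_n\operatorname{ran}(W^n)=\{{\bf 0}\}\oplus\overline{\Delta L^2}$, and both finish identically via $\overline{\Delta H^2}=\overline{\Delta L^2}$.

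The defect computations, however, are genuinely different. For $\DD_{X_b^*}$ the paper does not compute $I-X_bX_b^*$ at all; instead it invokes Theorem~\ref{th:adjoint for the extreme case} (the unitary $\Omega$ intertwining $X_{\tilde b}$ and $X_b^*$) to transfer the result for $\DD_{X_{\tilde b}}$ to $\DD_{X_b^*}$. Your direct calculation from Lemma~\ref{le:adjoint of X} is shorter and avoids that machinery entirely. For $\DD_{X_b}$ the paper stays in the geometric model: it identifies $\DD_{S^*\oplus Z_\Delta^*}=\bbC({\bf 1}\oplus{\bf 0})$ and then uses Exercise~\ref{exr:defect of restriction} to get $\DD_{X_b}=\bbC\,P_{\KK_b}({\bf 1}\oplus{\bf 0})$, checking by hand that this projection is nonzero. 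Your route---computing $(I-X_b^*X_b)h=h(0){\bf 1}+\<S^*h,S^*b\>_{\HH(b)}b$, then trapping the range in $\HH(b)\cap\operatorname{span}\{{\bf 1},b\}$ and invoking $b\notin\HH(b)$---trades the geometric picture for Theorem~\ref{th:extreme: b is not there}(1), and pleasantly makes the lower bound $\dim\DD_{X_b}\ge1$ a corollary of the third assertion rather than a separate check. Each approach has its economy: the paper's is more uniform (everything through $\KK_b$ and the $\tilde b$ duality), yours is more elementary and keeps the analytic identity $X_b^*h=Sh-\<h,S^*b\>b$ at center stage.
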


\begin{proof}
From Theorem~\ref{th:geometrical representation of de branges spaces}(3) it follows that we may prove the properties for the restriction  $S^*\oplus Z^*_\Delta|\KK_b$. Since $S^*$ acts isometrically on $H^2_0$ and $Z^*_\Delta$ is unitary, we have $\DD_{S^*\oplus Z^*_\Delta}=\bbC (1\oplus 0)$. By Exercise~\ref{exr:defect of restriction}, $\DD_{X_b}=\bbC P_{\KK_b}(1\oplus 0)$. But $P_{\KK_b}(1\oplus 0)\not=0$; indeed, otherwise we would have $1\oplus 0=bh\oplus \Delta h$ for some $h\in H^2$; since $h\not=0$ a.e., this would imply $\Delta=0$ a.e., or $b$ inner, which is impossible if $1=bh$. Therefore $\dim\DD_X=1$.

Applying the same argument to $\tilde b$ and using~\eqref{eq:Xb*=X_tilde b}, it follows that  $\dim\DD_{X^*}=1$.

Finally, suppose $\YY\subset\KK$ is a closed subspace on which $X_b$ acts isometrically, and $h\oplus g\in \YY$, we have, for any $n\ge 0$,
\[
\|h\|^2+\|g\|^2=\|h\oplus g\|^2=\|S^*{}^nh\oplus Z_\Delta^*{}^ng\|^2=
\|S^*h{}^n\|^2+\| Z_\Delta^*{}^ng\|^2\to \| Z_\Delta^*{}^ng\|^2=\|g\|^2,
\]
whence $h=0$. But then we must have $0\oplus g\perp bf\oplus \Delta f$ for all $f\in H^2$, or $g\perp \overline{\Delta H^2}=\overline{\Delta L^2}$. Since, on the other hand, $g\in\overline{\Delta L^2}$, it follows that $g=0$, which ends the proof of the theorem.
\end{proof}

\section{$\HH(b)$ as a model space}

The purpose of this section is to prove the converse of Theorem~\ref{th:properties of X for b extreme}. This  will show that in the extreme case the de Branges--Rovnyak spaces are model spaces for a large class of operators.
The theorem below (as well as its proof) is in fact a particular case of the much more general analysis of contractions done in the Sz.Nagy--Foias theory (see~\cite{NF}). Here we have adapted the argument to a ``minimal'' self-contained form.

\begin{theorem}\label{th:model property for b extreme}
Suppose $T\in\BB(H)$ is a contraction such that $\dim \DD_T=\dim \DD_{T^*}=1$, and there exists no subspace of $H$ invariant by $T$ and such that its restriction therein is an isometry. Then there exists an extreme $b$ in the  unit ball of $H^\infty$ such that $T$ is unitarily equivalent to $X_b$.
\end{theorem}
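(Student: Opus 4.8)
The plan is to construct the function $b$ and the unitary equivalence by building a functional model directly from $T$, mimicking in reverse the geometric representation of Theorem~\ref{th:geometrical representation of de branges spaces}. Since $\dim\DD_T=\dim\DD_{T^*}=1$, fix unit vectors $e\in\DD_T$ and $e_*\in\DD_{T^*}$, so that $D_T$ and $D_{T^*}$ may be regarded as rank-one operators and $\DD_T,\DD_{T^*}$ identified with $\bbC$. The natural candidate for the characteristic function is
\[
\Theta_T(z)=\Big(-T+zD_{T^*}(I-zT^*)^{-1}D_T\Big)\Big|\DD_T,
\]
viewed, via the identifications above, as a scalar analytic function on $\bbD$; one checks from the Julia operator $J(T)$ being unitary (Exercise~\ref{exr:Julia}) that $\Theta_T$ is in the unit ball of $H^\infty$. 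I would then set $b=\Theta_T$ (possibly after composing with the involution $\,\tilde{}\,$ of Exercise~\ref{exr:b and tilde b extreme}, to match the side conventions of $X_b$ versus $X_{\tilde b}$ in Theorem~\ref{th:adjoint for the extreme case}), and $\Delta=(1-|b|^2)^{1/2}$.

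The core of the argument is the classical Sz.-Nagy--Foias construction of the minimal unitary dilation. One first dilates $T$ to its minimal isometric dilation $V$ on a space $\HHH\supset H$, and then to the minimal unitary dilation $U$ on $\KKK\supset\HHH$. Using the rank-one defect spaces, one produces an explicit spectral-type model: $\KKK$ is identified with $L^2\oplus\overline{\Delta L^2}$, on which $U$ acts as $Z\oplus Z_\Delta$ (here one needs $b$ extreme, i.e.\ Lemma~\ref{le:characterization of extreme}, so that $\overline{\Delta H^2}=\overline{\Delta L^2}$ and the residual part of the dilation has the right form), the embedding of the ``past'' and ``future'' subspaces matches $H^2_-\oplus\{0\}$ and $\{bf\oplus\Delta f:f\in H^2\}$, and therefore $H$ itself is carried onto $\KK_b$ as in the decompositions displayed in the proof of Theorem~\ref{th:adjoint for the extreme case}. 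Under this identification $T=P_H U|H$ becomes $P_{\KK_b}(Z^*\oplus Z_\Delta^*)P_{\KK_b}|\KK_b$ — wait, more precisely $T^*$ corresponds to $P_{\KK_b}(Z\oplus Z_\Delta)|\KK_b$ and hence $T$ to $X_b$ after applying $P_1$ of Theorem~\ref{th:geometrical representation of de branges spaces}(3). The hypothesis that $T$ has no invariant subspace on which it restricts to an isometry is exactly what forces $V$ (equivalently the $*$-residual part) to carry no unitary summand acting on the $H^2$ side, which is what guarantees the model is of pure $\KK_b$ type rather than $\KK_b$ plus an extra unitary piece; it is the analogue, in reverse, of the last paragraph of Theorem~\ref{th:properties of X for b extreme}.

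The step I expect to be the main obstacle is making the passage from the abstract minimal unitary dilation to the concrete spectral model $L^2\oplus\overline{\Delta L^2}$ with $U=Z\oplus Z_\Delta$ fully rigorous in a self-contained way: one must show that the scalar spectral measure of $U$, restricted to the cyclic subspace generated by the defect vectors, has an absolutely continuous part of multiplicity one plus a singular/residual part whose density is governed by $\Delta^2=1-|b|^2$, and that the wandering vectors line up so that the Fourier representation sends $V$ to $S\oplus V_\Delta$ and the defect subspace to $\bbC(\1\oplus 0)$ (matching $\DD_{X_b}$ as computed in Theorem~\ref{th:properties of X for b extreme}). Concretely, I would verify: (i) $\|D_{T}h\|=\|\Delta \hat h\|$ for the appropriate Fourier-type transform $\hat h$, exactly as in the computation $\|D_{T_b}h\|^2=\|\Delta h\|^2$ in the proof of Theorem~\ref{th:geometrical representation of de branges spaces}; (ii) the two subspaces $H^2_-\oplus\{0\}$ and $\{bf\oplus\Delta f\}$ are orthogonal with trivial intersection (using $b$ extreme) and together with $\KK_b$ span $L^2\oplus\overline{\Delta L^2}$; (iii) minimality of the dilation, combined with the no-isometric-restriction hypothesis, eliminates any leftover summand. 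Granting the standard facts about minimal unitary dilations that are available from the Sz.-Nagy--Foias theory cited in the excerpt, steps (i)--(iii) are then bookkeeping, and the conclusion $T\cong X_b$ with $b$ extreme follows.
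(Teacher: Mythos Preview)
Your overall strategy---build the minimal unitary dilation of $T$ and identify it with a concrete functional model on $L^2\oplus\overline{\Delta L^2}$---is exactly the paper's. The tactical difference is in how $b$ is produced: you write down the characteristic function $\Theta_T(z)=-T+zD_{T^*}(I-zT^*)^{-1}D_T$ a priori and then try to match the dilation to the model for that particular $b$, whereas the paper first constructs the dilation $W$ explicitly as the bi-infinite Julia matrix, embeds two copies of $L^2$ into $\HHH$ via wandering orthonormal families $\omega_\pm$, and then \emph{reads off} $b$ from the relation $\omega_-^*\omega_+=M_b$ (obtained because $\omega_-^*\omega_+$ commutes with $Z$). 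The paper's route sidesteps the identification problem you flag as the main obstacle: once $b$ is \emph{defined} by $M_b=\omega_-^*\omega_+$, the map $U(\omega_+f_++\omega_-f_-)=(f_-+bf_+)\oplus\Delta f_+$ is checked to be isometric by a two-line computation, and no separate spectral analysis of the dilation is needed.

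One point in your sketch is logically out of order. You write ``here one needs $b$ extreme'' to get $\overline{\Delta H^2}=\overline{\Delta L^2}$, but extremality of $b$ is part of the \emph{conclusion}, not a hypothesis you may invoke. In the paper this is the final step: after $U(H)$ is identified inside $H^2\oplus\overline{\Delta L^2}$, the leftover piece $Y=\{0\}\oplus(\overline{\Delta L^2}\ominus\overline{\Delta H^2})$ is a subspace on which the model operator acts isometrically, so the no-isometric-restriction hypothesis forces $Y=\{0\}$; by Lemma~\ref{le:characterization of extreme} this is precisely extremality of $b$. You clearly have the right ingredient (``eliminates any leftover summand''), but the argument must run: build the model with $\overline{\Delta L^2}$ first, then use the hypothesis to collapse it to $\overline{\Delta H^2}$ and \emph{deduce} that $b$ is extreme.
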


\begin{proof} Since the proof is rather long, we divide it in several steps.

\smallskip
\noindent{\bf Step 1. Dilation of $T$}.
To find the required function $b$, we will develop a certain geometrical construction.
Changing the order of the components in the range of the Julia operator yields a unitary operator mapping $H\oplus \DD_{T^*}$ into $\DD_T\oplus H$ according to the matrix $\left(\begin{smallmatrix} D_T & -T^*\\T &  D_{T^*}\end{smallmatrix}\right)
$. We can extend this unitary to a unitary $W$ acting on the single enlarged space 
\[\HHH=\dots\oplus \DD_{T^*}\oplus \DD_{T^*}\oplus H\oplus \DD_T\oplus \DD_T\oplus \dots,\]
that can be written as an bi-infinite operator matrix:
\begin{equation}\label{eq:the big matrix}
W= \begin{pmatrix}
\ddots & & & &&&&\\
&1&&&&&&\\
&& 1 &&&&&\\
&&& D_{T}&
-T^* &&&\\
&&& \framebox{T}& D_{T^*} &&&\\
&&&&&1&&\\
&&&&&&1&\\
&&&&&&&\ddots
\end{pmatrix}
\end{equation}
where the boxed entry corresponds to the central entry $T:H\to H$. If we write $\HHH=\HHH_-\oplus H\oplus \HHH_+$, with 
\[
\HHH_-=\dots\oplus \DD_{T^*}\oplus \DD_{T^*}, \quad \HHH_+=\DD_T\oplus \DD_T\oplus \dots,
\]
then $\HHH_-$ is invariant by $W$, which acts therein as translation to the left, while $\HHH_+$ is invariant by $W^*$, whose restriction is translation to the right. (This is a consequence of the fact that the $1$ entries in the definition of $W$ are all located  \emph{immediately above} the main diagonal.)

\smallskip
\noindent{\bf Step 2. Two embeddings of $L^2$ into $\HHH$.}
Take a unit vector $\epsilon^-_{-1}$ in the $\DD_{T^*}$ component of $\HHH_-$ which is mostly to the right, and define, for $n\in\bbZ$, $\epsilon^-_{n}=W^{-n-1}\epsilon^-_{-1}$. Since  $\dim\DD_{T^*}=1$, the family $(\epsilon^-_n)_{n\le -1}$ forms an orthonormal basis of $\HHH_-$. Moreover,  the whole family $(\epsilon^-_n)_{n\in\bbZ}$ is an orthonormal set in $\HHH$ (exercise!). 

As $(e^{int})_{n\in\bbZ}$ is an orthonormal basis in $L^2$, we may define $\omega_-:L^2\to\HHH$ to be the unique isometry that satisfies $\omega_-(e^{int})=\epsilon^-_n$ for all $n\in\bbZ$. One checks easily that its image $\omega_-(L^2)$ is a reducing space for $W$, and $\omega_-^*W\omega_-=M_{e^{-it}}$. The orthogonal projection onto $\omega_-(L^2)$ is $\omega_-\omega_-^*$, and it commutes with $W$.

An analogous construction can be made for $\HHH_+$. We obtain an orthonormal set $(\epsilon^+_n)_{n\in\bbZ}$ in $\HHH$, such that $(\epsilon^+_n)_{n\ge0}$ is a basis for $\HHH_+$. Then $\omega_+:L^2\to\HHH$ is the isometry that satisfies $\omega_+(e^{int})=\epsilon^+_n$ for all $n\in\bbZ$; $\omega_+(L^2)$ is also a reducing space for $W$, $\omega_+^*W\omega_+=M_{e^{-it}}$, and $\omega_+\omega_+^*W=W\omega_+\omega_+^*$.

\smallskip
\noindent{\bf Step 3. Finding $b$.}
Consider then the map $\omega_-^*\omega_+:L^2\to L^2$. We have, using the above remarks as well as the equalities $\omega_+^*\omega_+=\omega_-^*\omega_-=I_{L^2}$,
\begin{equation}\label{eq:commutation for omega-*omega+}
\begin{split}
\omega_-^*\omega_+M_{e^{-it}}&= \omega_-^*\omega_+ \omega_+^*W\omega_+
=\omega_-^*W\omega_+ \omega_+^*\omega_+=\omega_-^*W\omega_+ \\
&=(\omega_-^*\omega_-)\omega_-^*W\omega_+=
\omega_-^* W \omega_-\omega_-^*\omega_+= 
M_{e^{-it}} \omega_-^*\omega_+.
\end{split}
\end{equation}
So $\omega_-^*\omega_+$ commutes with $M_{e^{-it}}$; it follows that it commutes also with its inverse $M_{e^{it}}$ (exercise!). We have noticed in the introduction that in this case we must have $\omega_-^*\omega_+=M_{b}$ for some function $b\in L^\infty$, and $\|\omega_-^*\omega_+\|\le 1$ implies $\|b\|_\infty\le 1$.

Now, $b=M_b {\bf 1}= \omega_-^*\omega_+ 1=\omega_-^*\epsilon^+_0$. Since $\epsilon^+_0\in\HHH_+\perp\HHH_-=\omega_-(\overline{H^2_0})$, it follows that $\omega_-^*\epsilon^+_0\in H^2$. Thus $b\in L^\infty\cap H^2=H^\infty$, and we have found our candidate for the  function in the unit ball of $H^\infty$. It remains now to check that it satisfies the required properties. As above, we will denote $\Delta=(1-|b|^2)^{1/2}\in L^\infty$.

\smallskip
\noindent{\bf Step 4. Constructing the unitary equivalence.}
Let us now note that the closed linear span $\omega_+L^2\vee \omega_- L^2$ equals $\HHH$. Indeed, it reduces $W$ and  contains $\HHH_+$ and $\HHH_-$; thus its orthogonal $Y$ has to be a reducing subspace of $W$ contained in $H$ (more precisely, in its embedding in $\HHH$). From~\eqref{eq:the big matrix} it follows then that $W|Y=T|Y$, so $Y$ should be a subspace of $H$ invariant by $T$ and such that the restriction is isometric (even unitary!), which contradicts the hypothesis. Thus $Y=\{0\}$.

We define then a mapping $U:\omega_+L^2\vee \omega_- L^2\to L^2\oplus \overline{\Delta L^2}$ by
\begin{equation*}
U(\omega_+ f_++\omega_-f_-)=(f_-+bf_+)\oplus \Delta f_+.
\end{equation*}
We have
\[
\begin{split}
\|\omega_+ f_++\omega_-f_-\|^2&= \|\omega_+f_+\|^2+\|\omega_-f_-\|^2+ 2\Re \<\omega_+ f_+ ,\omega_- f_- \>\\ 
&=\|f_+\|^2_2+\|f_-\|^2_2+2\Re \<\omega_-^* \omega_+ f_+ ,f_- \>_2
=\|f_+\|^2_2+\|f_-\|^2_2+ 2\Re \< bf_+,f_- \>_2,
\end{split}
\]
and
\[
\begin{split}
\| (f_-+bf_+)\oplus \Delta f_+ \|^2&=\int |f_-+bf_+|^2 +\int \Delta^2 |f_+|^2\\
&=\int |f_-|^2 +|bf_+|^2+2\Re bf_+\bar f_- + \Delta^2 |f_+|^2\\&=
\int |f_-|^2 +|f_+|^2+2\Re bf_+\bar f_-,
\end{split}
\]
whence  $U$ is an isometry, and it is easy to see that the image is dense. It can be extended to a unitary operator, that we will denote by the same letter,
\[
U:\HHH\to L^2\oplus \overline{\Delta L^2}.
\]
The commutation relations satisfied by $\omega_\pm$ imply that $UW=(Z^*\oplus Z_\Delta^*)U$.

\smallskip
\noindent{\bf Step 5. Final checks.}
Now $U(\HHH)=U(\HHH_-)\oplus U(H)\oplus U(\HHH_+)$, and
\[
\begin{split}
U(\HHH_-)&=U(\omega_-(\overline{H^2_0}))=\{f_-\oplus 0:f_-\in \overline{H^2_0}=\overline{H^2_0}\oplus \{0\},\\
U(\HHH_+)&=U(\omega_+(H^2))=\{bf_+\oplus \Delta f_+:f_+\in H^2\},
\end{split}
\]
so
\[
\begin{split}
U(H)&=(L^2\oplus \overline{\Delta L^2})\ominus \Big((\overline{H^2_0}\oplus \{0\})\oplus (\{bf_+\oplus \Delta f_+:f_+\in H^2\}) \Big)\\
&= (H^2\oplus \overline{\Delta L^2})\ominus (\{bf_+\oplus \Delta f_+:f_+\in H^2\}).
\end{split}
\]
As shown by~\eqref{eq:the big matrix}, $T$ can be viewed as the compression of $W$ to $H$, so it is unitarily equivalent through $U$ to the compression of $Z^*\oplus Z_\Delta^*$ to $U(H)$. This last is easily seen to be the restriction of $S^*\oplus Z_\Delta^*$ to $(H^2\oplus \overline{\Delta L^2})\ominus (\{bf_+\oplus \Delta f_+:f_+\in H^2\})$.

We are very close to the end: Theorem~\ref{th:geometrical representation of de branges spaces} would end the proof, provided we could replace in the formula for $U(H)$ the space $H^2\oplus \overline{\Delta L^2}$ with $H^2\oplus \overline{\Delta H^2}$ and thus $Z_\Delta$ with $V_\Delta$. But the space 
\[
Y:=(H^2\oplus \overline{\Delta L^2})\ominus(H^2\oplus \overline{\Delta H^2})=\{0\}\oplus (\overline{\Delta L^2}\ominus\overline{\Delta H^2})
\]
is invariant with respect to $S^*\oplus Z_2^*$, which acts on it isometrically. By assumption, we must have $Y=\{0\}$, which means that $H^2\oplus \overline{\Delta L^2}=H^2\oplus \overline{\Delta H^2}$; this finishes the proof.\end{proof}
 
 \begin{exercise}\label{exr:general model implies special model}
 Show that Theorem~\ref{th:model property for b extreme} implies Theorem~\ref{th:unitary equivalence for inner function}.
 \end{exercise}

 \section{Further reading}

We discuss in this section some directions in which the study of de Branges--Rovnyak spaces has developed.

The model spaces $\K_u$ have no nonconstant multipliers. However, the theory of multipliers is interesting for the case of de Branges--Rovnyak spaces corresponding to nonextreme $b$; see references~\cite{LS1, LS2, LS3}.

Integral representations of de Branges--Rovnyak spaces appear in~\cite{BK}, and have further been developed in~\cite{P, FM1, FM2}. The last paper is used in~\cite{BFM} to obtain weighted norm inequalities for functions in  de Branges--Rovnyak spaces.

The connection between de Branges--Rovnyak  and Dirichlet spaces is exploited in~\cite{CGR, CR}; that between de Branges--Rovnyak spaces and composition operators in~\cite{J}.

Finally, it is natural from many points of view (including that of model spaces) to consider also matrix or operator valued de Branges--Rovnyak spaces. These have already been introduced in~\cite{BK}; see~\cite{BBH0, BBH} for some recent developments.

\section*{Acknowledgements}

This work  was partially supported by a grant of the Romanian National Authority for Scientific
Research, CNCS Ð UEFISCDI, project number PN-II-ID-PCE-2011-3-0119.

This work will appear in the Proceedings Volume of the CRM Conference \emph{Invariant Subspaces of the Shift Operator}, August 26--30, 2013. The author is grateful to the organizers of the Conference for the support provided.

\end{document}